\newtheorem{theorem}{Theorem}
\newtheorem{Lemma}[theorem]{Lemma}
\newtheorem{proposition}[theorem]{Proposition}
\newtheorem{corollary}[theorem]{Corollary}
\newtheorem{fact}[theorem]{Fact}
\renewenvironment{proof}{{\noindent \emph{Proof.} }}{\hfill $\Box$ \\} 
\renewcommand{\qed}{\hfill $\Box$}
\newcommand{\w}{\color{black}}
\begin{document}

\title {
On $k$-colorability of $(bull,H)$-free graphs}

\author[1]{Nadzieja Hodur}
\author[1]{Monika Pil\'sniak}
\author[1]{Magdalena Prorok}
\author[1, 2]{Ingo Schiermeyer}
\affil[1]{\normalsize AGH University of Krakow, al. Mickiewicza 30, 30-059 Krak\'ow, Poland}
\affil[2]{\normalsize TU Bergakademie Freiberg, 09596 Freiberg, Germany}

\date{\today}

\maketitle
\begin{abstract}
    The $3$-colorability problem is a well-known NP-complete problem and it remains NP-complete for $bull$-free graphs, where a $bull$ is the graph consisting of a $K_3$ with two pendant edges attached to two of its vertices. In this paper, for $k\geq3$, we characterize all $k$-colorable $(bull,claw)$-free graphs containing an induced cycle of length at least $6$. Moreover, we present the full characterization of all non $4$-colorable connected  $(bull,claw)$-free graphs and $(bull,chair, C_5)$-free graphs, and all non $5$-colorable connected $(bull, claw, C_5)$-free graphs.
\end{abstract}

\noindent
{\w Keywords: $4$-colorable graphs, forbidden induced subgraphs, perfect graphs, complexity\\               
Math. Subj. Class.: { 05C15,  05C17, 68Q25, 68W40. }                      

\section{Introduction}\label{sec:intro}

We consider finite, simple, and  undirected graphs.
For terminology and notations not defined here, we refer to~\cite{BM08}.

An {\it induced subgraph} of a graph $G$
is a graph on a vertex set $S \subseteq V(G)$
for which two vertices are adjacent
if and only if they are adjacent in $G$.
In particular, we say that the subgraph is \emph{induced by $S$}.
We also say that a graph $H$ is an \emph{induced subgraph} of $G$ if $H$~is
isomorphic to an induced subgraph of $G$.

Given a family $\cal{H}$ of graphs and a graph $G$, we say that $G$ is \emph{$\cal{H}$-free}
if $G$ contains no graph from $\cal{H}$ as an induced subgraph.
In this context, the graphs of $\cal{H}$ are referred to as
\emph{forbidden induced subgraphs}.

A graph is {\it $k$-colorable} if each of its vertices can be colored with one of $k$ colors
so that adjacent vertices obtain distinct colors.
The smallest integer $k$ such that a given graph $G$ is $k$-colorable
is called the {\it chromatic number} of $G$, denoted by $\chi(G).$
Clearly, $\chi(G) \geq \omega(G)$ for every graph $G$,
where $\omega(G)$ denotes the \emph{clique number} of $G$,
that is, the order of a maximum complete subgraph of $G$.
%
Furthermore, a graph $G$ is {\it perfect} if $\chi(G')=\omega(G')$
for every induced subgraph $G'$ of $G.$
For a subgraph $H$ and a vertex $v$, let $d_H(v) = |N(v) \cap V(H)|$.

For an induced cycle $C_p$ with $p \geq 3$ let $C_p[k_{1}, k_{2}, \ldots, k_{p}]$ denote the \emph{clique expansion} of an induced cycle $C_p$, where its vertices $v_1, v_2, \ldots, v_p$ are replaced by complete graphs $K_{k_i}$ for $1 \leq i \leq p$ and additional edges between all pairs of vertices from consecutive cliques. (see Fig. \ref{spindler}). By $G \oplus H$ we denote a graph with set of vertices $V(G) \cup V(H)$ and set of edges $E(G) \cup E(H) \cup \{vw: v \in V(G), w \in V(H)\}$.

Let $G$ be a clique expansion $C[k_1, \ldots, k_n]$ of a cycle $C_n$ and let $k \in \mathbb{N}$. Let us label the vertices of the first clique with numbers $1, \ldots, k_1$, vertices of the second clique with numbers $k_1+1, \ldots, k_1+k_2$ and so on. Then the \emph{circular $k$-coloring algorithm}, called also $k$-CC algorithm, is an algorithm assigning to $m$-th vertex of $G$ the color $((m-1) \textnormal{ mod } k)+1$ for $m=1, \ldots, k_1+\ldots +k_n$.

The graph on five vertices $v_1$, $v_2$, $v_3$, $v_4$, $v_5$ and with the edges $v_1v_2$, $v_2v_3$, $v_3v_4$, $v_4v_5$, $v_2v_4$ is called a $bull$. 
Let $S_{i,j,k}$ be a $3$-star with edges subdivided respectively $i-1$, $j-1$ and $k-1$ times.
The graph $S_{1,1,1}$ is called a \emph{claw} and $S_{1,1,2}$ is called a $chair$. 

The independence number $\alpha(G)$ of the graph $G$ is the largest $k \in \mathbb{N}$ such that there exists $S \subset V(G)$, satisfying $|S| = k$ and $S$ is a set of independent vertices.

The $3$-colorability problem is a well-known NP-complete problem and it remains NP-complete for $claw$-free graphs and $K_3$-free graphs. In the last two decades, a large number of results of colorings of graphs with forbidden subgraphs have been shown (cf.~\cite{BDHKRSV22}, \cite{alpha3}, \cite{clawdiamondnet}, \cite{Ran}, \cite{RS04-1}, \cite{RST02}, \cite{Sum} and cf.~\cite{GJPS17}, \cite{RS04}, \cite{RS19} for three surveys).

Our research has been motivated by~\cite{clawdiamondnet} and we use some definitions and notations from it. A graph $G$ of order $3p+1$, $p\geq1$ is called a \emph{spindle graph} $M_{3p+1}$ if it contains a~cycle $C$: $u_0 u_1 \ldots  u_{3p} u_0$, where $\{u_{3i-2}, u_{3i-1}, u_{3i+1}, u_{3i+2}\}=N_G(u_{3i})$ and $\{u_{3i-3}, u_{3i}\}=N_G(u_{3i-1}) \cap N_G(u_{3i-2})$ for each $i \in [p]$, where $[p]:=\{1, 2, \ldots, p\}$.

Observe that $M_4 \cong K_4$ and $M_7$ is known as the Moser spindle.

\begin{figure}[H]
    \centering
    \includegraphics[width=0.4\linewidth]{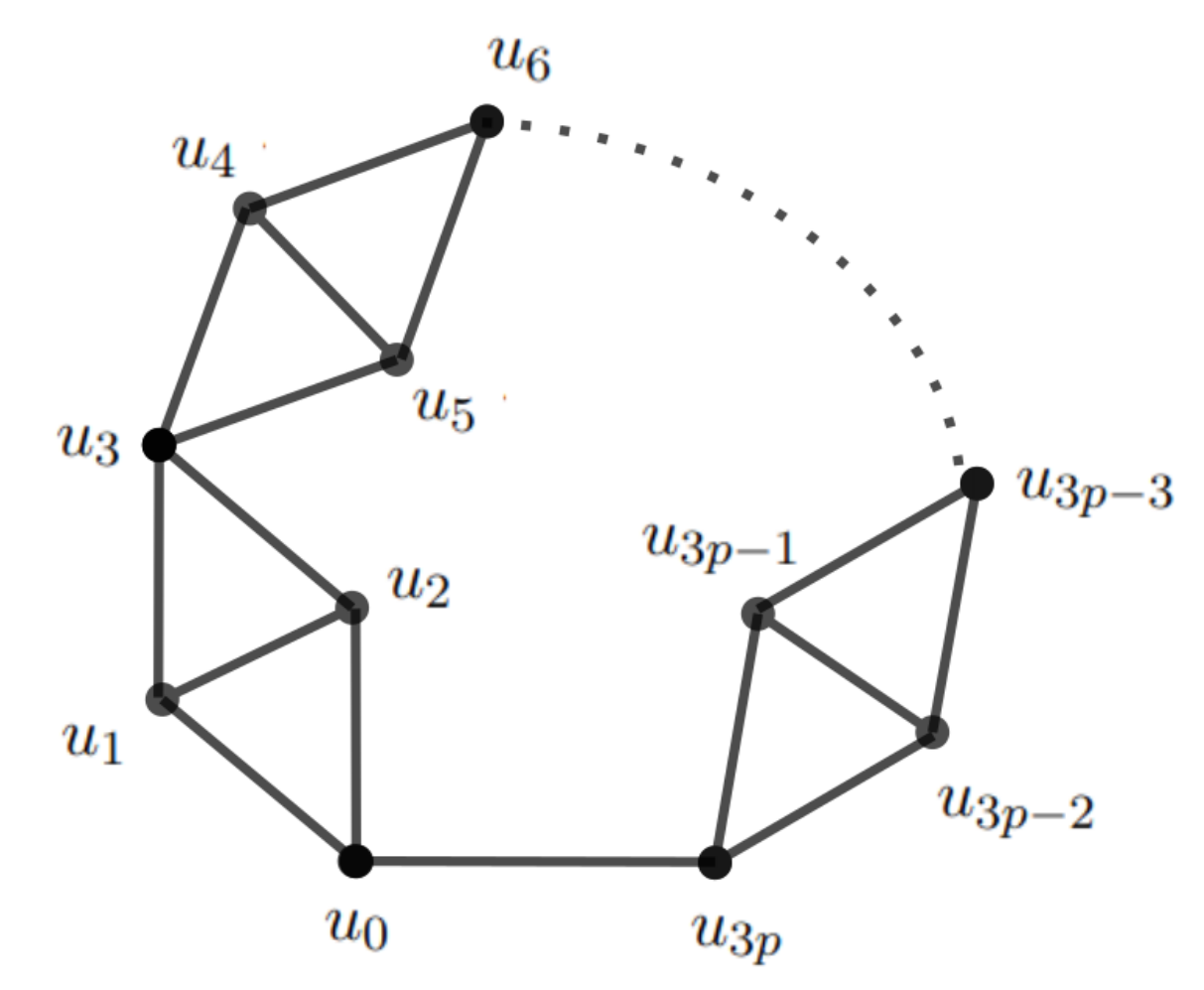}
    
    \caption{The spindle graph $M_{3p+1}$. It could be also consider as the clique  expansion $C_{2p+1}[2,1,2,1,...,2,1,1]$ for $p\geq 2$.}\label{spindler}
\end{figure}

\begin{proposition}[\cite{clawdiamondnet}]
    The graph $M_{3p+1}$ is not 3-colorable for every $p \geq 1$. 
\end{proposition}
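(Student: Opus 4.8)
The plan is a short proof by contradiction exploiting the \emph{diamond chain} structure of $M_{3p+1}$. First I would dispose of the base case $p=1$ separately: since $M_4\cong K_4$ (as noted above) and $K_4$ is not $3$-colorable, the statement holds for $p=1$. So assume henceforth that $p\geq 2$, and suppose for contradiction that $c\colon V(M_{3p+1})\to\{1,2,3\}$ is a proper $3$-coloring.

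The crucial observation is that for every $i\in[p]$ the four vertices $u_{3i-3},u_{3i-2},u_{3i-1},u_{3i}$ induce a diamond (a $K_4$ with one edge removed) whose missing edge is $u_{3i-3}u_{3i}$. Indeed, $u_{3i-3}u_{3i-2}$, $u_{3i-2}u_{3i-1}$ and $u_{3i-1}u_{3i}$ are consecutive edges of the cycle $C$; the edges $u_{3i-3}u_{3i-1}$ and $u_{3i-2}u_{3i}$ are forced by the hypothesis $\{u_{3i-3},u_{3i}\}=N_G(u_{3i-1})\cap N_G(u_{3i-2})$; and $u_{3i-3}u_{3i}\notin E(M_{3p+1})$ because $N_G(u_{3i})=\{u_{3i-2},u_{3i-1},u_{3i+1},u_{3i+2}\}$ and, once $p\geq 2$, none of these four vertices equals $u_{3i-3}$ (this is the point where one checks the index arithmetic modulo $3p+1$). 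In any proper $3$-coloring of a diamond the two vertices $u_{3i-2},u_{3i-1}$ of its central edge receive two distinct colors, so each of the two non-adjacent tips $u_{3i-3}$ and $u_{3i}$, being adjacent to both of them, is forced onto the one remaining color; hence $c(u_{3i-3})=c(u_{3i})$.

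Applying this for $i=1,2,\dots,p$ and using that the right tip $u_{3i}$ of the $i$-th diamond is the left tip of the $(i+1)$-st, we obtain $c(u_0)=c(u_3)=c(u_6)=\cdots=c(u_{3p})$. But $u_{3p}u_0$ is an edge of $C$, so $c(u_{3p})\neq c(u_0)$, a contradiction. Therefore no proper $3$-coloring of $M_{3p+1}$ exists.

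I expect the only genuinely delicate step to be the verification that each such four-vertex set really induces a diamond --- concretely, that the ``non-edge'' $u_{3i-3}u_{3i}$ is absent --- which is exactly where $p\geq 2$ is needed (for $p=1$ the set collapses to a $K_4$, and that is why the case $p=1$ is treated on its own); the remainder is a one-line propagation of colors around the spindle. As an alternative, the same idea runs in the clique-expansion model $C_{2p+1}[2,1,2,\dots,2,1,1]$: a size-$2$ clique uses two colors and forces the adjacent size-$1$ clique onto the third, so all size-$2$ cliques reuse a common pair of colors and the last size-$1$ clique --- simultaneously adjacent to a size-$2$ clique and to a size-$1$ clique carrying the third color --- cannot be colored, which is the same contradiction.
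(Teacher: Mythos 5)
Your proof is correct. The paper does not prove this proposition---it is imported from \cite{clawdiamondnet} without proof---so there is nothing to compare against; your argument (each quadruple $u_{3i-3},u_{3i-2},u_{3i-1},u_{3i}$ forces $c(u_{3i-3})=c(u_{3i})$ because both tips see the edge $u_{3i-2}u_{3i-1}$, and the colour propagates around the spindle to contradict the edge $u_{3p}u_0$) is the standard one and is carried out carefully, including the index check that isolates $p=1$. One minor remark: the colour-forcing step never actually uses that $u_{3i-3}u_{3i}$ is a non-edge (if it were an edge you would get an immediate contradiction), so the case split at $p=1$ and the diamond verification, while correct, could be omitted entirely.
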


Since the $3$-colorability problem is NP-complete for claw-free graphs and $K_3$-free graphs (cf.~\cite{GJPS17}), it is also NP-complete for $bull$-free graphs.
The following theorem in~\cite{clawdiamondnet} and \cite{stara} have motivated our research.

\begin{theorem}[\cite{clawdiamondnet}]
    Let $G$ be a connected $(bull, claw)$-free graph. Then one of the following holds
    \begin{enumerate}[label=(\roman*)]
        \item $G$ contains  $W_5$ or
        \item $G$ contains a spindle graph $M_{3i+1}$ for some $i\geq1$ or 
        \item $G$ is 3-colorable.
    \end{enumerate}
\end{theorem}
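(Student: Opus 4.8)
The plan is to suppose $G$ contains neither $W_5$ nor any spindle graph $M_{3i+1}$ and to show $\chi(G)\le 3$; since $M_4\cong K_4$, this supposition already forces $\omega(G)\le 3$. If $G$ is perfect we are done, because then $\chi(G)=\omega(G)\le 3$, so assume $G$ is imperfect. A first reduction bounds the maximum degree: for every vertex $v$, claw-freeness gives $\alpha(N(v))\le 2$, and $\omega(G)\le 3$ makes $N(v)$ triangle-free, so the bound $R(3,3)=6$ forces $|N(v)|\le 5$, with $N(v)\cong C_5$ in the extremal case; but then $\{v\}$ together with $N(v)$ induces a $W_5$, which is excluded. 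Hence $\Delta(G)\le 4$ and every neighbourhood is an induced subgraph of $C_5$; in particular $\chi(G)\le 4$ by Brooks' theorem, and the whole task is to rule out $\chi(G)=4$.

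Next I would exhibit a long induced cycle. As $G$ is imperfect, the Strong Perfect Graph Theorem gives an induced odd hole or an induced odd antihole in $G$. An antihole $\overline{C_{2t+1}}$ with $t\ge 4$ contains an independent set of $C_{2t+1}$ of size $t\ge 4$, hence an induced $K_4=M_4$ in $G$ --- a contradiction; and $\overline{C_7}$ contains the Moser spindle $M_7$ as a subgraph, again a contradiction. Therefore $G$ has an induced odd cycle; let $C=v_1v_2\cdots v_\ell v_1$ be a longest induced cycle of $G$, so $\ell\ge 5$.

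The core of the argument is to show that $G$ is a clique expansion of $C$. Every $u\notin C$ has $\alpha(N(u)\cap C)\le 2$, so $u$ is not adjacent to three pairwise non-consecutive vertices of $C$; combining this with the maximality of $C$ (an induced path leaving and re-entering $C$ would yield a longer induced cycle), with bull-freeness (which forbids the pendant-type attachments that mere claw-freeness would allow --- for instance a clique attached to a single vertex of a blown-up arc of $C$ creates a bull), and with the absence of $W_5$ (which forbids a vertex complete to an induced $C_5$ sitting inside the structure), one shows that every $u\notin C$ is complete to one ``blown-up'' vertex $v_i$ or to two consecutive ones, and that vertices lying over disjoint arcs are non-adjacent. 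By connectedness this forces $G\cong C[k_1,\dots,k_\ell]$ for suitable $k_1,\dots,k_\ell\ge 1$ (a join with a non-empty clique cannot occur, as $\ell\ge 5$ would then give $\omega(G)\ge 4$). This structural reduction --- organising the full case analysis of how vertices outside $C$ attach, while tracking maximality, bull-freeness and the no-$W_5$ constraint --- is the step I expect to be the \emph{main obstacle}; the case $\ell=5$ may require separate, finer treatment since $C_5$ is small enough to admit extra configurations.

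Finally I analyse $3$-colourability of $H=C[k_1,\dots,k_\ell]$ with $\omega(H)\le 3$. Two consecutive cliques are completely joined, so $k_i+k_{i+1}\le 3$; hence $k_i\le 2$ for all $i$, and no two consecutive $k_i$ equal $2$, i.e.\ the set of indices with $k_i=2$ is independent in $C_\ell$. Running the circular $3$-colouring ($3$-CC) algorithm around $C$ properly colours each clique and each consecutive pair, so the only possible defect is a wrap-around clash; equivalently, $\chi(H)=\max\!\big(\max_i(k_i+k_{i+1}),\ \lceil(\sum_i k_i)/\lfloor \ell/2\rfloor\rceil\big)$, so $\chi(H)\ge 4$ together with $\omega(H)\le 3$ forces $\ell$ odd, say $\ell=2p+1$, with exactly $p$ of the $k_i$ equal to $2$ and $p+1$ equal to $1$. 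Up to rotation this pattern is uniquely $2,1,2,1,\dots,2,1,1$; that is, $H\cong C_{2p+1}[2,1,2,1,\dots,2,1,1]=M_{3p+1}$, contradicting our supposition. Hence $\chi(G)=\chi(H)\le 3$, which completes the proof.
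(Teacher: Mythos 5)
This statement is quoted from \cite{clawdiamondnet} and the present paper gives no proof of it, so there is nothing to compare against line by line; I can only judge your argument on its own terms. Several of your steps are sound and nicely economical: the Ramsey argument giving $\Delta(G)\le 4$ (a triangle or an independent set of size $3$ in $N(v)$ is excluded, and the $5$-vertex extremal case yields an induced $W_5$), the elimination of odd antiholes ($\overline{C}_{2t+1}$ with $t\ge 4$ contains $K_4=M_4$, and $\overline{C}_7$ contains $M_7$ as a spanning subgraph --- note this forces you to read ``contains'' as ordinary subgraph containment, which is indeed the only reading under which the theorem is true, since $\overline{C}_7$ itself has no induced $W_5$, $K_4$ or $M_7$), and the final analysis of clique expansions of odd cycles, which correctly identifies $C_{2p+1}[2,1,2,1,\ldots,2,1,1]=M_{3p+1}$ as the unique non-$3$-colorable pattern with $\omega\le 3$ (this matches Theorem~\ref{cliqueexpansion} specialized to $k=3$).

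The proof nevertheless fails at exactly the point you flag as the ``main obstacle'': the claim that $G$ is a clique expansion of its longest induced cycle $C$. For $\ell\ge 6$ this is true (it is Theorem~\ref{thm:claw}(ii), cited in this paper from \cite{alpha3}), but you neither cite nor actually prove it --- you list the ingredients. More seriously, for $\ell=5$ the claim is simply false, and $\ell=5$ is where all the named obstructions ($W_5$, $M_4$, $M_7$) live. Concretely, take $C_5=v_1v_2v_3v_4v_5$ and add a vertex $u$ adjacent to $v_1,v_2,v_3,v_4$ but not $v_5$: one checks directly that this graph is $(bull,claw)$-free, contains no $W_5$, $K_4$ or $M_7$, has $C_5$ as its longest induced cycle, yet is not a clique expansion of $C_5$ ($u$ meets four consecutive cycle vertices, and the graph has $9$ edges while $C_5[2,1,1,1,1]$ has $8$). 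This graph happens to be $3$-colorable, so it does not refute the theorem, but it refutes the structural dichotomy your proof rests on; the whole difficulty of the theorem --- classifying how arbitrary $(bull,claw)$-free graphs sit around an induced $C_5$, including the graphs with $\alpha(G)=2$ where no clique-expansion picture is available at all --- is concentrated in precisely the case you leave open. As it stands the argument reaches the conclusion only by glossing over this step, so it does not constitute a proof.
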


\begin{theorem}[\cite{stara}]\label{thm:bullchair} 
Let $G$ be a connected $(bull, chair)$-free graph. Then 
\begin{enumerate}[label=(\roman*)]
    \item $G$ contains an odd wheel or
    \item $G$ contains a spindle graph $M_{3i+1}$ for some $i\geq 1$ or
    \item $G$ is 3-colorable.
\end{enumerate}
\end{theorem}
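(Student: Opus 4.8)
The plan is to argue by minimal counterexample, invoking the Strong Perfect Graph Theorem to dispose of the cases where $G$ is perfect or contains an induced antihole, so that the real work concerns an induced odd hole.

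Suppose the statement fails and let $G$ be a connected $(bull,chair)$-free graph with $|V(G)|$ minimum among counterexamples; thus $G$ is not $3$-colorable and contains neither an odd wheel nor a spindle $M_{3i+1}$. Every proper connected induced subgraph of $G$ is again $(bull,chair)$-free and satisfies the conclusion by minimality; since it cannot contain an odd wheel or a spindle (these would then occur in $G$), it is $3$-colorable. Hence every proper induced subgraph of $G$ is $3$-colorable while $G$ is not, so $\chi(G)=4$ and $G$ is $4$-vertex-critical; in particular $G$ is $2$-connected and has no clique cutset. We may also assume $G$ has an induced claw, since otherwise $G$ is $(bull,claw)$-free and the cited theorem of~\cite{clawdiamondnet} on connected $(bull,claw)$-free graphs applies, $W_5$ being an odd wheel. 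If $G$ is perfect then $\chi(G)=\omega(G)=4$, so $G$ contains $K_4\cong M_4$, a spindle --- a contradiction. Hence $G$ is imperfect and, by the Strong Perfect Graph Theorem, contains an induced odd hole $C_\ell$ with $\ell\ge 5$ or an induced odd antihole $\overline{C_\ell}$ with $\ell\ge 7$.

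Consider the antihole case. As the bull is self-complementary and the complement of the chair has a vertex of degree $3$ while $C_\ell$ is $2$-regular, no antihole $\overline{C_\ell}$ contains an induced bull or chair, so this case can genuinely arise; but if $\ell\ge 9$ then $\omega(G)\ge\omega(\overline{C_\ell})=\alpha(C_\ell)=(\ell-1)/2\ge 4$, so $G$ contains $K_4\cong M_4$, a contradiction, while if $\ell=7$ then $\overline{C_7}$ is a $4$-chromatic induced subgraph of the $4$-critical graph $G$, forcing $G=\overline{C_7}$; identifying $V(\overline{C_7})$ with $\mathbb{Z}_7$ so that $i\sim j\iff i-j\in\{\pm2,\pm3\}$, the sets $\{0,2\},\{4\},\{1,6\},\{3\},\{5\}$ realize the clique expansion $C_5[2,1,2,1,1]=M_7$, so $G$ contains a spindle --- again a contradiction. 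Therefore $G$ contains an induced odd hole; fix one, $C=v_1v_2\cdots v_\ell$, with $\ell$ minimum.

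The heart of the proof is the analysis of how the remaining vertices of $G$ attach to $C$. Using only $(bull,chair)$-freeness and the minimality of $\ell$, I would first show that for $u\in V(G)\setminus V(C)$ the set $N(u)\cap V(C)$ cannot be a single vertex $v_i$ (otherwise $u,v_{i-1},v_i,v_{i+1},v_{i+2}$ induce a chair) and, more generally, is either empty, a sub-path of $C$, or all of $V(C)$; in the last case $V(C)\cup\{u\}$ induces the odd wheel $W_\ell$, a contradiction, so $N(u)\cap V(C)$ is empty or a sub-path of $C$. Next I would show that $V(C)$ together with all vertices attached to it induces a clique expansion $C_\ell[k_1,\dots,k_\ell]$ of $C$ (the cliques being the classes of vertices with equal closed neighborhood), and then, using that $G$ has no clique cutset, that $G$ is itself such a clique expansion of an odd cycle. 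Finally I would use the fact that a clique expansion of an odd cycle which is not $3$-colorable must contain a spindle $M_{3p+1}$ as a subgraph --- equivalently, it must contain $K_4$ or have clique-size sequence $(2,1,2,1,\dots,2,1,1)$ up to rotation; this is a partial converse to the Proposition and follows by running the circular $k$-coloring algorithm with $k=3$. Since $G$ is such a clique expansion and $\chi(G)=4$, it follows that $G$ contains a spindle, the final contradiction.

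The step I expect to be the main obstacle is this attachment analysis: ruling out the various degenerate ways a vertex might attach to $C$ (a disconnected neighborhood on $C$, a sub-path together with extra private off-cycle neighbors, and so on) by means of carefully chosen induced bulls and chairs, and then propagating these local restrictions around the whole cycle to conclude that, in a $4$-critical, clique-cutset-free, $(bull,chair)$-free graph, nothing can be attached to $C$ beyond the clique expansion it generates. A secondary point is the subcase $\omega(G)=2$, where one must separately argue that a triangle-free chair-free graph --- which is automatically bull-free --- is $3$-colorable, so that in fact $\omega(G)=3$. I expect the claw-free analogue of~\cite{clawdiamondnet}, where forbidding only the claw yields just $W_5$, to be the right template: permitting induced claws (but not chairs) near the hole is exactly what lets a vertex be complete to an arbitrarily long odd hole, and hence what produces the unbounded family of odd wheels $W_{2k+1}$ in conclusion~(i).
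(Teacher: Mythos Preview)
The paper does not prove this theorem: it is quoted from the companion manuscript~\cite{stara} and used as a black box (notably in Section~6.2). So there is no ``paper's own proof'' to compare against.

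That said, your overall architecture --- Strong Perfect Graph Theorem, dispose of antiholes, then analyze attachments to an induced odd hole and reduce to a clique expansion of an odd cycle --- is exactly the template this paper uses for its own results (Theorems~\ref{thm:bullclaw} and~\ref{thm:bullchairC5}), and the structural facts you would need for the hole case are proved here in Section~6.2 for $(bull,chair)$-free graphs with an induced odd cycle of length $\ge 7$: every vertex in $N(Q)$ has $N_Q(w)=\{v_{i-1},v_i,v_{i+1}\}$ or $N_Q(w)=Q$; the three-neighbor vertices form a clique expansion of $Q$; and the universal-to-$Q$ vertices $D$ separate the rest of $G$ and dominate each component they meet. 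So your plan is well aligned with the available machinery.

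However, as written the proposal is a sketch, not a proof, and you say so yourself. Two points deserve more care. First, in the odd-hole case you have not handled $\ell=5$: the facts in Section~6.2 are proved for $p\ge 7$, and for $C_5$ the attachment types are richer (compare the sets $A_i,B_i,C$ in Section~5), so the reduction to a clique expansion does not go through; you need a separate argument that a $(bull,chair)$-free graph with an induced $C_5$ and $\chi=4$ contains $W_5$ or some $M_{3i+1}$. Second, your ``secondary point'' that triangle-free, chair-free graphs are $3$-colorable is not obvious and is not supplied by anything in this paper; you should either prove it or avoid it (e.g.\ note that a $4$-critical graph with $\omega=2$ is triangle-free, whence every induced odd cycle gives, via your attachment analysis, a clique expansion with all $k_i=1$, which \emph{is} $3$-colorable --- contradiction). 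The antihole paragraph is fine once one reads ``contains $M_7$'' as ``contains $M_7$ as a subgraph'', which is the sense used throughout the paper.
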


The goal of this paper is to study $k$-colorability of $(bull, H)$-free graphs.
For $k\geq 4$, we will assume without loosing generality that $\delta(G) \geq 4$, since otherwise $G$ can be reduced by removing vertices of degree less than $4$ (its coloring is trivial). The following are our main results. The first theorem provides necessary and sufficient conditions for any clique expansion of an odd cycle to be $k$-colorable.

\begin{theorem}\label{cliqueexpansion}
    Let $n\geq 1$, $k\geq3$ and $G$ be a clique expansion $C_{2n+1}[k_1, \ldots, k_{2n+1}]$. Then $G$ is $k$-colorable if and only if the following two conditions are satisfied (all indices are taken modulo $k$): 
    \begin{enumerate}[label=(\roman*)]
        \item \label{1.} $\forall i\in [2n+1]\;\; k_i+k_{i+1} \leq k$;  
        \item \label{2.} $\sum_{i=1}^{2n+1} k_i \leq nk$.
    \end{enumerate}
\end{theorem}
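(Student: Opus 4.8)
The plan is to prove both directions, treating necessity as the easy part and sufficiency via an explicit coloring (the $k$-CC algorithm introduced above) as the heart of the argument.

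For \emph{necessity}, suppose $G = C_{2n+1}[k_1,\dots,k_{2n+1}]$ is $k$-colorable. Condition \ref{1.} is immediate: consecutive cliques $K_{k_i}$ and $K_{k_{i+1}}$ together induce a complete graph $K_{k_i+k_{i+1}}$ in $G$, so $k \ge \omega(G) \ge k_i + k_{i+1}$. For condition \ref{2.}, fix a proper $k$-coloring and for each color class count how many vertices it can occupy. A color class is an independent set of $G$; within the cyclic structure, an independent set meets each clique $K_{k_i}$ in at most one vertex and cannot meet two consecutive cliques. Thus an independent set, viewed as a subset of the indices $\{1,\dots,2n+1\}$ arranged in a cycle, is an independent set in the cycle $C_{2n+1}$, which has size at most $n$. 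Hence each of the $k$ color classes has at most $n$ vertices, giving $\sum k_i = |V(G)| \le nk$, which is \ref{2.}.

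For \emph{sufficiency}, assume \ref{1.} and \ref{2.} hold; I would run the $k$-CC algorithm: label the vertices $1,2,\dots,N$ in clique order (first all of $K_{k_1}$, then $K_{k_2}$, etc., where $N=\sum k_i$) and color vertex $m$ with $((m-1)\bmod k)+1$. Within a single clique the labels are consecutive, and since $k_i \le k_i+k_{i+1} \le k$ by \ref{1.}, consecutive labels get distinct colors, so each clique is properly colored. The only possible conflicts are between the last vertices of $K_{k_i}$ and the first vertices of $K_{k_{i+1}}$ (consecutive cliques), and — because the cycle closes up — between the last vertices of $K_{k_{2n+1}}$ and the first vertices of $K_{k_1}$. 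For the non-wraparound junctions, the block of labels covering $K_{k_i}\cup K_{k_{i+1}}$ has length $k_i+k_{i+1}\le k$, so all these labels are pairwise incongruent mod $k$ and the coloring is proper there. The delicate point is the wraparound: vertex $1$ (first of $K_{k_1}$) has color $1$, while vertex $N$ (last of $K_{k_{2n+1}}$) has color $((N-1)\bmod k)+1$, and we need these — and more generally the first $k_1$ colors and the last $k_{2n+1}$ colors — to be disjoint.

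I expect the wraparound analysis to be \textbf{the main obstacle}. The key observation is that condition \ref{2.}, $N \le nk$, controls the total ``drift'' of the coloring around the cycle. Traversing all $2n+1$ cliques advances the label by $N$; if $N$ were exactly a multiple of $k$ the coloring would match up perfectly, and $N \le nk$ together with the parity forced by the odd cycle is what one must exploit. Concretely, I would argue that the set of colors used on $K_{k_1}$ is $\{1,\dots,k_1\}$ and the set used on $K_{k_{2n+1}}$ is a block of $k_{2n+1}$ consecutive residues ending at $(N-1)\bmod k$; writing $N = qk + r$ with $0 \le r < k$, condition \ref{1.} applied to the pair $(k_{2n+1},k_1)$ gives $k_{2n+1}+k_1 \le k$, and I must check that the last $k_{2n+1}$ residues and the first $k_1$ residues do not overlap, i.e. that $r \ge k_1$ or the blocks are otherwise separated. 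This forces a short case analysis on $r$ versus $k_1$; if $r < k_1$ one needs to re-index the starting point of the $k$-CC algorithm (equivalently, rotate which clique is called ``first'') so that the slack guaranteed by $N \le nk$ is placed exactly at the wraparound seam. I would make this rigorous by choosing the starting clique to be one maximizing a suitable partial-sum criterion, so that the residue at the seam leaves room $\ge k_1$; the existence of such a choice is exactly where $\sum k_i \le nk$ (rather than merely $\le (n+1)k$, say) is used, completing the proof.
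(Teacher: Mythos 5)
Your necessity argument is correct, and for condition (ii) it is actually cleaner than the paper's: you observe that an independent set of $G$ meets each clique at most once and never meets two consecutive cliques, so it corresponds to an independent set of $C_{2n+1}$, whence $\alpha(G)\le n$ and $|V(G)|\le k\,\alpha(G)\le nk$. The paper instead runs a more laborious ``how many colors of $K^1$ can be repeated on $K^{2n}$'' propagation argument; your route reaches the same conclusion in two lines.

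The sufficiency direction, however, has a genuine gap. You correctly identify the wraparound seam as the obstacle, but your proposed repair --- rotating which clique is called first in the $k$-CC algorithm --- cannot work. The last color produced by the $k$-CC algorithm is $((N-1)\bmod k)+1$ where $N=\sum_i k_i$, and this value is invariant under rotation of the starting clique; only the sizes $k_{j-1},k_j$ straddling the seam change. Concretely, take $G=C_7[1,1,\ldots,1]=C_7$ and $k=3$: conditions (i) and (ii) hold ($2\le 3$ and $7\le 9$) and $C_7$ is $3$-colorable, yet every rotation of the $3$-CC algorithm colors the vertices $1,2,3,1,2,3,1$ and places color $1$ at both ends of the seam. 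What is needed is to modify the coloring itself so that the slack $nk-N\ge 0$ is actually consumed somewhere along the cycle: the paper does this by running $k$-CC only until the first even clique $K^{2l}$ whose colors overlap $\{1,\ldots,k_1\}$ sufficiently, and from then on alternating two fixed blocks of colors on the remaining odd and even cliques (for $C_7$ with $k=3$ this yields $1,2,1,2,1,2,3$), which stalls the cyclic drift before the seam. Your sketch contains no mechanism of this kind, so the plan as written would fail on exactly the cases where the drift $N\bmod k$ is nonzero and small.
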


And, we can observe an easy corollary of it using Theorem \ref{thm:bullclaw2} and Theorem \ref{thm:claw}.

\begin{corollary}\label{thm:kkol}
Let $G$ be a connected $(bull, claw)$-free graph containing an induced cycle of length $p\geq 7$. If $\alpha(G)\geq 3$, then $G$ is $k$-colorable or $G$ contains $K_{k+1}$ or $G$ is a clique expansion $C_{p}[k_1, \ldots, k_{p}]$ such that there exists $i\in [p]$ such that $k_i+k_{i+1} > k$ or  $\sum_{i=1}^{p} k_i > nk$, where all indices are taken modulo $k$.
\end{corollary}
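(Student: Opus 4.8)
The plan is to derive the corollary by feeding the colorability criterion of Theorem~\ref{cliqueexpansion} into the structural description of Theorem~\ref{thm:bullclaw2} (which itself rests on the claw-free structure input of Theorem~\ref{thm:claw}). Suppose $G$ is as in the statement and assume, towards the conclusion, that $G$ is not $k$-colorable and contains no $K_{k+1}$; it then suffices to show that $G$ is a clique expansion $C_p[k_1,\dots,k_p]$ of its induced cycle for which $k_i+k_{i+1}>k$ for some $i$, or $\sum_{i=1}^{p}k_i>nk$ (indices modulo $p$, the relevant case being $p=2n+1$ odd). First I would apply Theorem~\ref{thm:bullclaw2}: $G$ is connected, $(bull,claw)$-free, has an induced cycle $C_p$ with $p\ge 7$, and satisfies $\alpha(G)\ge 3$ --- this last condition being in fact automatic since $\alpha(C_p)=\lfloor p/2\rfloor\ge 3$, but it is precisely what places us in the branch of that theorem that excludes the line-graph/short-hole exceptions --- so, once the outcomes ``$G$ is $k$-colorable'' and ``$G$ contains $K_{k+1}$'' are discarded, the only remaining possibility is that $G$ is exactly the clique expansion $C_p[k_1,\dots,k_p]$ of the cycle carrying the hole.

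With $G=C_p[k_1,\dots,k_p]$ established, I would split on the parity of $p$. If $p$ is even, then $C_p$ is perfect and, since replacing the vertices of a perfect graph by cliques preserves perfection, $G$ is perfect; hence $\chi(G)=\omega(G)=\max_i(k_i+k_{i+1})\le k$ because $G$ contains no $K_{k+1}$, contradicting that $G$ is not $k$-colorable. Therefore $p$ is odd; write $p=2n+1$. Now Theorem~\ref{cliqueexpansion} applies directly and says that $G$ is $k$-colorable if and only if $k_i+k_{i+1}\le k$ for all $i$ and $\sum_{i=1}^{2n+1}k_i\le nk$; since $G$ is not $k$-colorable, one of these inequalities fails, which is exactly the remaining alternative of the corollary. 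Collecting the cases --- $G$ contains $K_{k+1}$, or, failing that, the even-$p$ case yields a $k$-coloring while the odd-$p$ case yields the violated clique-expansion inequality --- finishes the proof. (The conclusion could even be sharpened to ``$\sum_{i=1}^{p}k_i>nk$'', since $k_i+k_{i+1}>k$ already forces a $K_{k+1}$, but the weaker disjunction is all that is asserted.)

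I expect the only genuine obstacle to lie in the first step, namely the claim that a $(bull,claw)$-free graph with an induced cycle of length $\ge 7$ must be, globally, a clique expansion of that cycle (up to the $K_{k+1}$/perfect exceptions). This is where $claw$- and $bull$-freeness interact: $claw$-freeness confines each vertex off the hole to attach only within two consecutive cliques of the eventual expansion, and $bull$-freeness together with connectedness then propagates that local rigidity around the whole cycle. In the present paper this work is packaged into Theorem~\ref{thm:bullclaw2} (and Theorem~\ref{thm:claw}), so the proof of the corollary reduces to bookkeeping: matching the conclusions of those theorems to the three listed alternatives, noting that clique expansions of even cycles are absorbed by the ``$G$ contains $K_{k+1}$'' option, and invoking Theorem~\ref{cliqueexpansion} for the clique expansions of odd cycles.
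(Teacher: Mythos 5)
Your proof is correct and takes essentially the same route the paper intends: the paper gives no separate argument, stating only that the corollary follows from Theorem~\ref{cliqueexpansion} via Theorems~\ref{thm:bullclaw2} and~\ref{thm:claw}, which is exactly your reduction (clique-expansion structure from the claw-free results, the perfect/even cases absorbed by $\chi=\omega$ into the $k$-colorable or $K_{k+1}$ alternatives, and Theorem~\ref{cliqueexpansion} applied to the odd clique expansions). Your closing remark that $k_i+k_{i+1}>k$ already forces a $K_{k+1}$ is also correct.
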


Next, we obtain the full characterization of all non $4$-colorable connected  $(bull,claw)$-free graphs, and $(bull,chair, C_5)$-free graphs. 

\begin{theorem}\label{thm:bullclaw} 
Let $G$ be a connected $(bull, claw)$-free  graph and $i\geq 1$. Then 
\begin{enumerate}[label=(\roman*)]
    \item $G$ contains $\overline{C}_7 \oplus K_1$ or
    \item $G$ contains $C_5 \oplus K_2$ or
    \item $G$ contains $M_4 \oplus K_1$ or $M_7 \oplus K_1$ or
    \item $G$ contains $C_{2i+1}[2, 2, \ldots, 2, 1, 3, 1, 3, \ldots, 1]$ or $C_{2i+1}[2, 2, \ldots, 2, 1]$ or
    \item $G$ contains an induced cycle $C_5$ and $|V(G)|>8$ or
    \item $G$ is 4-colorable. 
\end{enumerate}
\end{theorem}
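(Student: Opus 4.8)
The plan is to build on Theorem~\ref{thm:bullclaw2} (the structural classification of connected $(bull,claw)$-free graphs containing a long induced cycle, invoked in Corollary~\ref{thm:kkol}) together with Theorem~\ref{thm:claw} for the claw-free pieces, and to do a careful case analysis according to the longest induced cycle in $G$. First I would dispose of the case where $G$ is perfect: a perfect $(bull,claw)$-free graph is $4$-colorable unless it contains $K_5$, and $K_5$ is easily seen to contain $M_4\oplus K_1$ only when extra vertices are present; more precisely, if $\omega(G)\le 4$ and $G$ is perfect we land in item~(vi), and if $\omega(G)\ge 5$ then $G\supseteq K_5$ which — being $(bull,claw)$-free and connected with $\delta(G)\ge 4$ forced — yields one of the graphs in (iii) or (iv) (e.g.\ $M_4\oplus K_1$). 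So the real work concerns imperfect graphs, i.e.\ by the Strong Perfect Graph Theorem those containing an induced odd hole $C_{2i+1}$ with $i\ge 2$ or an odd antihole.

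The antihole case is short: the only antiholes that are $(bull,claw)$-free are $\overline{C}_7$ (and $\overline{C}_5=C_5$), so either $G$ contains $\overline{C}_7$, in which case any fifth vertex dominating it (forced by connectivity and $\delta\ge 4$) gives $\overline{C}_7\oplus K_1$ as in~(i), or the only induced odd hole is $C_5$. For the $C_5$ subcase I would argue that a connected $(bull,claw)$-free graph whose only odd hole is $C_5$ and with $|V(G)|\le 8$ is $4$-colorable by a direct bounded-case check, while $|V(G)|>8$ is exactly item~(v); and the step where $C_5\oplus K_2$ or $C_5\oplus K_1$-type configurations appear has to be cross-checked against~(ii). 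The heart of the proof is the case where $G$ contains an induced cycle $C_p$ with $p\ge 7$: here Theorem~\ref{thm:bullclaw2} should tell us $G$ is essentially a clique expansion $C_p[k_1,\dots,k_p]$ (possibly with a few exceptional attachments), and then Theorem~\ref{cliqueexpansion} with $k=4$ says $G$ is $4$-colorable iff $k_i+k_{i+1}\le 4$ for all $i$ and $\sum k_i\le 2 n$ where $p=2n+1$. Negating this, I would show that a violating clique expansion must contain, as an induced subgraph, one of the extremal graphs $C_{2i+1}[2,2,\dots,2,1,3,1,3,\dots,1]$ or $C_{2i+1}[2,2,\dots,2,1]$ listed in~(iv): the first type captures the "sum too large" obstruction with all $k_i\in\{1,2\}$, and a single $k_i\ge 3$ adjacent to a $k_{i+1}\ge 2$ already forces the second pattern or an even cycle/odd hole reduction producing $M_4\oplus K_1$.

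Concretely, the induction in the clique-expansion case would run on $p$. The base cases $p=7$ (and the degenerate shorter analogues needed for the patterns) are checked by hand. For the inductive step, given $C_p[k_1,\dots,k_p]$ not $4$-colorable, either some $k_i+k_{i+1}\ge 5$ — then the vertices in those two cliques together with one vertex from each neighbouring clique contain $K_5$ or a configuration reducible to $M_4\oplus K_1$ — or all consecutive sums are $\le 4$ but $\sum k_i\ge 2n+1$; in the latter situation a counting/averaging argument finds three consecutive cliques whose sizes we can shrink (replacing $k_{i-1},k_i,k_{i+1}$ by a shorter cycle on fewer cliques) so that the reduced expansion is still non-$4$-colorable, and by induction it contains one of the target graphs, which lifts back to $G$. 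I expect the main obstacle to be precisely this reduction bookkeeping: one must ensure that the "short cut" in the clique expansion is an \emph{induced} subgraph operation and that it does not destroy the odd parity of the cycle, so that Theorem~\ref{cliqueexpansion} still applies after the reduction; handling the interaction between the two forbidden configurations in~(iv) (when exactly a $3$ appears versus when everything is $\le 2$) is the delicate point, and getting the off-by-one in the threshold $\sum k_i\le 2n$ exactly right in every sub-case is where the proof is most error-prone.
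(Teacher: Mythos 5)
Your overall architecture (perfect case via SPGT, then odd hole of length $\ge 7$ via Theorem~\ref{thm:bullclaw2} and Theorem~\ref{cliqueexpansion} with $k=4$, then the antihole/$C_5$ case) matches the paper, but the antihole branch contains a genuine error and the $C_5$ branch is essentially missing. Your claim that ``the only antiholes that are $(bull,claw)$-free are $\overline{C}_7$ (and $C_5$)'' is false: every odd antihole has independence number $2$, and both the bull and the claw have independence number $3$, so \emph{all} odd antiholes are $(bull,claw)$-free. Consequently you cannot discard $\overline{C}_9$ and longer antiholes; the paper handles them by observing that $\overline{C}_{11}$ and longer contain $K_5=M_4\oplus K_1$ (item (iii)) and that $\overline{C}_9$ contains $C_5[1,3,1,3,1]$ (item (iv)). Similarly, your assertion that a vertex outside $\overline{C}_7$ is ``forced by connectivity and $\delta\ge 4$'' to dominate it is wrong: Lemma~\ref{notwocons} only forces $d_{\overline{Q}}(w)\ge 4$, and the paper must separately treat $d_{\overline{Q}}(w)=4$ (which yields an induced $C_5$), $d_{\overline{Q}}(w)\in\{5,6\}$ (where a second attached vertex yields either $C_5[1,3,1,3,1]$ or $K_5$), and $d_{\overline{Q}}(w)=7$ (which yields $\overline{C}_7\oplus K_1$).

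The $C_5$ subcase with $|V(G)|\le 8$ is not ``4-colorable by a direct bounded-case check'': this is precisely where the obstructions $C_5\oplus K_2$, $M_7\oplus K_1$ and $K_5$ arise, and it is the longest part of the paper's argument (a case analysis over the attachment types $A_i$, $B_i$, $C$ of outside vertices, using that $C_5=\overline{C}_5$ forces $\alpha(G)=2$ so that $C_5$ dominates $G$). You flag that this ``has to be cross-checked against (ii)'' but supply no argument, so the proof of items (ii) and the $M_7\oplus K_1$ part of (iii) is absent. By contrast, your treatment of the long-odd-hole case is fine in spirit but overcomplicated: once Theorem~\ref{thm:bullclaw2} says $G$ \emph{is} a clique expansion (not merely ``essentially'' one), Theorem~\ref{cliqueexpansion} applies directly and no induction on $p$ or reduction bookkeeping is needed — a violated condition (i) gives $K_5$ immediately, and a violated condition (ii) with all consecutive sums at most $4$ pins down the extremal expansions listed in (iv).
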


\begin{theorem}\label{thm:bullchairC5} 
Let $G$ be a connected $(bull, chair, C_5)$-free  graph and $i \geq 1$. Then 
\begin{enumerate}[label=(\roman*)]
    \item $G$ contains $\overline{C}_7 \oplus K_1$ or
    \item $G$ contains $C_{2i+1} \oplus K_2$ or
    \item $G$ contains $M_{3i+1} \oplus K_1$ or
    \item $G$ contains $C_{2i+1}[2, 2, \ldots, 2, 1, 3, 1, 3, \ldots, 1]$ or $C_{2i+1}[2, 2, \ldots, 2, 1]$ or
    \item $G$ is 4-colorable. 
\end{enumerate}
\end{theorem}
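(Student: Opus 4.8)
\textbf{Proof proposal for Theorem \ref{thm:bullchairC5}.}

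The plan is to mirror the structure of Theorem \ref{thm:bullclaw}, but to exploit the fact that forbidding the $chair$ together with $C_5$ is, in the relevant part of the argument, at least as strong as forbidding the $claw$, while additionally removing the $C_5$-based exceptional families. Concretely, I would first invoke Theorem \ref{thm:bullchair}: a connected $(bull,chair)$-free graph $G$ either contains an odd wheel, or contains a spindle $M_{3i+1}$, or is $3$-colorable. If $G$ is $3$-colorable it is $4$-colorable and we land in case (v); so assume $G$ contains an odd wheel $W_{2j+1}$ or a spindle $M_{3i+1}$. The key reduction step is the standing assumption $\delta(G)\ge 4$: any such $G$ with an odd wheel or spindle as an \emph{induced} subgraph must, because $G$ is connected and every vertex has degree at least $4$, have a vertex outside that subgraph attached to it; then I analyze, using $bull$- and $chair$-freeness, how such an ``extra'' vertex $v$ can attach. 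This is exactly the mechanism that produces the $\oplus K_1$ and $\oplus K_2$ outcomes in (i)--(iii): a vertex completely joined to the wheel/spindle, or a pair of adjacent such vertices.

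The second main step is a case split on which of the three ``cores'' occurs. If $G$ contains an induced odd hole $C_{2i+1}$ with $i\ge 3$ (length $\ge 7$), I would apply Corollary \ref{thm:kkol} with $k=4$: since $G$ is $(bull,claw)$-free here — note $chair$-free and $C_5$-free graphs containing a long hole are in particular $claw$-free in the neighborhood of that hole, which is the hypothesis Corollary \ref{thm:kkol} actually needs — either $G$ is $4$-colorable (case (v)), or $G$ contains $K_5$ (which forces $\overline{C}_7\oplus K_1$ or an $M_{3i+1}\oplus K_1$ configuration after accounting for the extra neighbor, landing in (i) or (iii)), or $G$ is a clique expansion $C_p[k_1,\dots,k_p]$ violating condition \ref{1.} or \ref{2.} of Theorem \ref{cliqueexpansion}; in the latter case a short combinatorial argument (again using $\delta(G)\ge 4$ and $chair$-freeness to bound how the $k_i$ can vary) shows the expansion must be of one of the two shapes in (iv). If instead the only odd hole available has length $3$ — i.e.\ $G$ contains a triangle-based structure like $W_5$, $M_4\cong K_4$, or $M_7$ — I handle these finitely many small configurations directly: the extra degree-$\ge 4$ neighbor(s), constrained by $bull$- and $chair$-freeness, are forced into the $\overline{C}_7\oplus K_1$, $C_3\oplus K_2$ (a case of $C_{2i+1}\oplus K_2$ with $i=1$), or $M_{3i+1}\oplus K_1$ patterns, or else everything closes up into a $4$-colorable graph.

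The third step is to verify the ``only if'' direction, i.e.\ that each of the graphs in (i)--(iv) is genuinely non-$4$-colorable: for $\overline{C}_7$ we have $\chi=4$, so $\overline{C}_7\oplus K_1$ needs $5$ colors; $\chi(C_{2i+1})=3$ so $C_{2i+1}\oplus K_2$ needs $5$; $\chi(M_{3i+1})=4$ by the Proposition quoted from \cite{clawdiamondnet}, so $M_{3i+1}\oplus K_1$ needs $5$; and for the two clique-expansion families one checks that condition \ref{1.} or \ref{2.} of Theorem \ref{cliqueexpansion} fails with $k=4$ (for $C_{2i+1}[2,\dots,2,1]$ the sum is $4i+1>4i=ni$... wait, $nk=i\cdot4$, and $2\cdot(2i)+1=4i+1>4i$, so \ref{2.} fails; for the $[2,\dots,2,1,3,1,3,\dots]$ pattern an adjacent pair sums to $2+3=5>4$, so \ref{1.} fails). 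This direction is routine.

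The part I expect to be the genuine obstacle is the forcing argument in step two for the clique-expansion case: showing that a $(bull,chair,C_5)$-free clique expansion of an odd hole that fails to be $4$-colorable must have precisely the two listed profiles, rather than some other irregular sequence of clique sizes. The $chair$-freeness (a weaker hypothesis than $claw$-freeness, so the argument cannot simply be copied verbatim from the $(bull,claw)$-free proof of Theorem \ref{thm:bullclaw}) has to be leveraged carefully to rule out, say, two separated ``large'' cliques or a clique of size $\ge 3$ that is not flanked by size-$1$ cliques on both sides; combined with the $C_5$-freeness, which restricts the hole length to odd $\ge 7$ once the triangle case is dispatched, this is where the bulk of the casework lives. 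I would organize it as a sequence of claims: (a) no $k_i\ge 4$ (else $K_5$, handled elsewhere); (b) if some $k_i=3$ then $k_{i-1}=k_{i+1}=1$; (c) the multiset of indices with $k_i=2$ forms a contiguous arc; from (a)--(c) the two normal forms in (iv) drop out.
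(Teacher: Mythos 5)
Your proposal has a genuine gap at its central step. You reduce the long-odd-hole case to the $(bull,claw)$-free machinery (Corollary~\ref{thm:kkol}, hence Theorem~\ref{thm:bullclaw2}) by asserting that a $(bull,chair,C_5)$-free graph is ``claw-free in the neighborhood of the hole.'' This is false: chair-freeness permits a vertex $w$ with $N_Q(w)=Q$ for a hole $Q$ of length at least $7$ (indeed, the paper's structural Fact shows this is exactly one of the two allowed neighborhood types), and such a $w$ together with three pairwise non-adjacent vertices of $Q$ induces a claw. So $G$ need not be a clique expansion of the hole, and Corollary~\ref{thm:kkol} does not apply. This is not a technicality: the hole-dominating vertices and the vertices behind them are precisely the source of the infinite exceptional families $C_{2i+1}\oplus K_2$ and $M_{3i+1}\oplus K_1$ in the statement. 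The paper's proof spends most of its effort here: it shows every vertex of $N(Q)$ is either ``local'' (adjacent to exactly three consecutive hole vertices, these together with $Q$ forming a clique expansion) or dominates $Q$; that the set $D$ of dominating vertices separates $Q$ from the rest, is independent unless $C_{2i+1}\oplus K_2$ appears, and dominates each component it meets; and then it applies Theorem~\ref{thm:bullchair} to those components, converting an odd wheel into $C_{2i+1}\oplus K_2$ and a spindle into $M_{3i+1}\oplus K_1$ via the dominating vertex. Your plan has no substitute for this analysis, and your opening move (attaching extra vertices to an odd wheel or spindle guaranteed by Theorem~\ref{thm:bullchair}) also never surfaces the odd-antihole case, which the paper enters through the Strong Perfect Graph Theorem and treats separately ($p\geq 11$ gives $K_5$, $p=9$ gives $C[1,3,1,3,1]$, $p=7$ is handled with Lemmas~\ref{notwocons}, \ref{noN2}, \ref{d4}).

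Two smaller errors: your non-$4$-colorability check for $C_{2i+1}[2,\ldots,2,1,3,1,3,\ldots,1]$ claims an adjacent pair sums to $2+3=5$, but in that pattern a $2$ and a $3$ are never adjacent (they are separated by a $1$); it is condition~\ref{2.} of Theorem~\ref{cliqueexpansion}, not condition~\ref{1.}, that fails. And your proposed normal-form claims (a)--(c) would at best classify non-$4$-colorable clique expansions, whereas the theorem asserts containment of one of the two listed expansions as an induced subgraph, which is the form in which the paper extracts them from Theorem~\ref{cliqueexpansion}.
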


Finally, we present a full characterization of all non $5$-colorable connected $(bull, claw, C_5)$-free graphs.

\begin{theorem}\label{thm:bullclawC5} 
Let $G$ be a connected $(bull, claw, C_5)$-free  graph. Then 
\begin{enumerate}[label=(\roman*)]
    \item $G$ contains $K_6$ or 
    \item $G$ contains $\overline{C}_7 \oplus K_2$ or
    \item $G$ contains $\overline{C}_9 \oplus K_1$ or
    \item $\alpha(G)=2$ and $|V(G)| \geq 11$ or
    \item $\alpha(G)=2$ and $\Delta(G) \geq 9$ or
    \item $G$ is a clique expansion $C_{2n+1}[k_1, \ldots, k_{2n+1}]$ with  $k_1 + \ldots +k_{2n+1} - 5n > 0$ or
    \item $G$ is 5-colorable. 
\end{enumerate}
\end{theorem}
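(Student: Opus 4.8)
The plan is to proceed along the same lines as the characterization of non $4$-colorable $(bull,claw)$-free graphs in Theorem~\ref{thm:bullclaw}, now tightened by the extra hypothesis that $G$ is $C_5$-free and with the target $k=5$. First I would dispose of the case where $G$ contains a long induced cycle. If $G$ has an induced $C_p$ with $p \geq 7$, then since $G$ is $claw$-free and $C_5$-free I expect to apply Theorem~\ref{thm:bullclaw2}/Theorem~\ref{thm:claw} (as in Corollary~\ref{thm:kkol}) to conclude that $G$ is a clique expansion $C_p[k_1,\ldots,k_p]$; Theorem~\ref{cliqueexpansion} then says $G$ is $5$-colorable unless condition \ref{1.} or \ref{2.} fails. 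Condition \ref{1.} failing for $k=5$ forces some two consecutive cliques of total size $\geq 6$, hence a $K_6$, giving (i). Condition \ref{2.} failing is exactly alternative (vi). So one may henceforth assume $G$ has no induced cycle of length $\geq 6$, and since $G$ is $C_5$-free, every induced cycle has length $3$ or $4$; in particular $G$ is $(C_5, C_6, C_7, \ldots)$-free, so its only induced cycles are triangles and $C_4$'s.

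Next I would handle the case $\alpha(G) = 2$. Here $\overline{G}$ is triangle-free, and $G$ being $C_5$-free means $\overline{G}$ is $C_5$-free as well, so $\overline{G}$ is a triangle-free, $C_5$-free graph; such graphs are well understood (Ramsey-type / Andrásfai-type structure). A graph with $\alpha(G)=2$ is $5$-colorable as soon as $|V(G)| \leq 10$ (partition into at most $5$ edges/vertices), which is why $|V(G)| \geq 11$ appears as (iv); and a large maximum degree forces a large clique in $\overline{G}$'s complement neighborhood, which I would convert into alternative (v), $\Delta(G) \geq 9$. The remaining small, bounded-degree cases with $\alpha(G)=2$ should be checked directly to be $5$-colorable, possibly pulling out $\overline{C}_7 \oplus K_2$ or $\overline{C}_9 \oplus K_1$ as the obstructions (ii), (iii) when they are not. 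This is the part that will require the most care, since it is essentially a finite but delicate case analysis on small Ramsey graphs $R(3,3)$-style.

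Finally, assume $\alpha(G) \geq 3$ and $G$ has no induced cycle of length $\geq 5$. Then $G$ is $(bull, claw, C_5, \overline{P_5})$-ish and in fact, being $\{claw, C_5, C_{\geq 6}\}$-free with an independent triple, I would argue $G$ must contain an induced $C_4$ (otherwise $G$ is chordal, hence perfect, hence $\omega(G)$-colorable, and $\omega(G) \le 5$ unless $K_6 \subseteq G$, giving (i) or (vii)). Given an induced $C_4 = abcd$, $claw$-freeness around each vertex together with $bull$-freeness severely restricts how the rest of $G$ attaches: I expect the neighborhoods to organize into a "blown-up $C_4$" type structure, and a blown-up $C_4$ is perfect, so again $G$ is $5$-colorable unless it contains $K_6$. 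The main obstacle throughout is the $\alpha(G)=2$ regime: translating the three forbidden induced subgraphs into structural constraints on the complement and then squeezing out precisely the thresholds $|V(G)| \geq 11$ and $\Delta(G) \geq 9$ together with the two sporadic obstructions $\overline{C}_7 \oplus K_2$ and $\overline{C}_9 \oplus K_1$; I would expect to handle it by a careful analysis of $\overline{G}$ as a triangle-free graph of independence number $2$ using known structure theorems for such graphs.
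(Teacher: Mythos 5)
Your first case (induced $C_p$, $p\ge 7$, with $\alpha\ge 3$) matches the paper exactly: clique expansion via Theorems~\ref{thm:bullclaw2} and~\ref{thm:claw}, then Theorem~\ref{cliqueexpansion} with $k=5$, failure of condition~\ref{1.} giving $K_6$ and failure of~\ref{2.} giving alternative (vi). The rest of the proposal has genuine gaps. First, your assertion that $\alpha(G)=2$ and $|V(G)|\le 10$ already forces $5$-colorability is false: $\chi(G)=|V(G)|-\beta_0(\overline G)$, so a $10$-vertex graph needs a \emph{perfect matching in the complement}, and $\overline{C}_9\oplus K_1$ (ten vertices, $\chi=6$) and $\overline{C}_7\oplus K_2$ (nine vertices, $\chi=6$) are exactly the witnesses that this can fail --- which is why they appear as exceptions (ii) and (iii). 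You acknowledge that the small cases "should be checked directly," but that check is the entire substance of the proof in the $\alpha=2$ regime, and you do not carry it out. The paper does it by working with the odd antihole $\overline Q$ (forced to have $p\in\{7,9\}$, since $p\ge 11$ yields $K_6$ or a bad clique expansion), proving that $H=G-\overline Q$ is a complete graph on at most three vertices, that every $w\in V(H)$ satisfies $5\le d_{\overline Q}(w)\le 6$ (via Lemmas~\ref{notwocons} and~\ref{d4}, using $C_5$-freeness), and then locating either a $K_6$ or a matching of size $5$ in $\overline G$. Your alternative plan --- analyze $\overline G$ as a triangle-free, $C_5$-free graph via Andr\'asfai-type structure --- is not obviously applicable (those theorems need minimum-degree hypotheses you have not secured) and is left entirely speculative.

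Second, your third case ($\alpha(G)\ge 3$, no induced cycle of length $\ge 5$) takes an unjustified detour: the claim that the graph "organizes into a blown-up $C_4$ structure" is neither proved nor needed. The clean argument, and the one the paper uses implicitly, is the Strong Perfect Graph Theorem: in this regime $G$ has no odd hole (it is $C_5$-free and you have already disposed of holes of length $\ge 7$) and no odd antihole (by Corollary~\ref{lem:alpha2} an odd antihole forces $\alpha(G)=2$), so $G$ is perfect and $\chi(G)=\omega(G)$, giving $K_6$ or $5$-colorability immediately. As written, your proposal establishes the easy clique-expansion alternative but does not prove the theorem in the $\alpha(G)=2$ case, which is where all the listed sporadic obstructions live.
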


\section{Preliminary results}\label{sec:preliminary}


We recall that a \textit{hole} in a graph $G$ is an induced cycle of
length at least $4$, and an \textit{antihole} in $G$ is an induced subgraph whose
complement is a cycle of length at least~$4$.
A hole (antihole) is \textit{odd} if it has an odd number of vertices.
As the main tool for proving Theorem~\ref{thm:bullclaw}
we will use the well-known Strong Perfect Graph Theorem
shown by Chudnovsky et al.~\cite{ChRST06}.

\begin{theorem}[Chudnovsky et al.~\cite{ChRST06}]
\label{tSPGT}
A graph is perfect if and only if it contains neither an odd hole
nor an odd antihole as an induced subgraph.
\end{theorem}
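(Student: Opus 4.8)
The statement to be proved is the Strong Perfect Graph Theorem, so any honest plan must follow the route of Chudnovsky, Robertson, Seymour and Thomas, since no substantially shorter argument is known. The easy direction is immediate: an odd hole $C_{2k+1}$ with $k\geq 2$ has $\chi=3>2=\omega$, and an odd antihole $\overline{C}_{2k+1}$ has $\chi=k+1>k=\omega$, so neither can occur as an induced subgraph of a perfect graph; hence every perfect graph is Berge, i.e. free of odd holes and odd antiholes. The whole difficulty lies in the converse, that every Berge graph is perfect.

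First I would reduce to minimal counterexamples. If some Berge graph were imperfect, I would take one, $G$, with $|V(G)|$ minimum, so that every proper induced subgraph of $G$ is perfect; that is, $G$ is minimal imperfect. At this point I would invoke Lov\'asz's Weak Perfect Graph Theorem ($G$ is perfect iff $\overline{G}$ is perfect) together with the classical structural properties of minimal imperfect graphs due to Lov\'asz and Padberg: such a $G$ has exactly $\alpha(G)\,\omega(G)+1$ vertices, is partitionable, and — crucially — contains no star cutset, by Chv\'atal's lemma. Since the class of Berge graphs is closed under complementation (the complement of an odd hole is an odd antihole) and minimal imperfection is preserved by complementation, I am free to pass between $G$ and $\overline{G}$ throughout the argument.

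The core of the plan is the Decomposition Theorem for Berge graphs: every Berge graph is either basic or admits a useful decomposition. The basic classes are bipartite graphs, complements of bipartite graphs, line graphs of bipartite graphs, complements of such line graphs, and double split graphs; each I would verify to be perfect directly (bipartite perfection is trivial, line graphs of bipartite graphs are perfect via K\"onig's edge-colouring theorem, the two complementary classes then follow from the Weak Perfect Graph Theorem, and double split graphs are checked by hand). The useful decompositions are the balanced skew partition, the proper $2$-join, and the proper $2$-join in the complement. Proving this dichotomy is where essentially all the work resides, and it is the main obstacle: it proceeds by an extremely long induced-subgraph case analysis organised around the substructures a non-basic Berge graph must contain — most importantly long prisms and stretched line-graph configurations, and various \emph{wheels} — showing that in each case one of the three decompositions appears. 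To keep this case analysis manageable I would develop Chudnovsky's trigraph and amalgam machinery, which lets one contract and re-expand homogeneous parts while preserving Berge-ness.

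To close the argument I would combine the Decomposition Theorem with the non-decomposability of minimal imperfect graphs. Our minimal imperfect Berge graph $G$ cannot be basic, since basic graphs are perfect; it cannot admit a proper $2$-join, by the theorem of Cornu\'ejols and Cunningham, nor one in its complement, by passing to $\overline{G}$; and it cannot admit a balanced skew partition, by the balanced-skew-partition lemma that refines Chv\'atal's no-star-cutset result and whose proof is itself a substantial ingredient. Applying the Decomposition Theorem to $G$ therefore produces a contradiction in every case, so no minimal imperfect Berge graph exists and every Berge graph is perfect. I expect the proof of the Decomposition Theorem to be by far the longest and hardest step, with the reductions above and the verification that basic graphs are perfect being comparatively routine.
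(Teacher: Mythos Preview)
Your outline is a faithful high-level sketch of the Chudnovsky--Robertson--Seymour--Thomas argument, and the easy direction is handled correctly. However, there is nothing to compare against: the paper does not supply its own proof of Theorem~\ref{tSPGT}. The Strong Perfect Graph Theorem is merely quoted from~\cite{ChRST06} and then used as a black box (to split the analysis of non-perfect graphs into the odd-hole and odd-antihole cases in the proofs of Theorems~\ref{thm:bullclaw}, \ref{thm:bullchairC5}, and~\ref{thm:bullclawC5}). So no proof is expected here, and your proposal, while not wrong as a plan, goes far beyond what the paper does or needs.
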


\subsection{Independence number in $claw$-free graphs} 

The following two theorems have been shown in \cite{alpha3} and Lemma \ref{lem:ben} is due to Ben Rebea.

\begin{theorem}\label{thm:bullclaw2}~\cite{alpha3}
    Every connected $(bull, claw)$-free graph $G$ such that $\alpha (G) \geq 3$ is perfect or is a clique expansion of an odd cycle of length at least $7$.
\end{theorem}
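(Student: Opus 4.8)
\medskip
\noindent\emph{Sketch of a possible proof.}
My plan is to use the Strong Perfect Graph Theorem (Theorem~\ref{tSPGT}). Assume $G$ is connected, $(bull,claw)$-free and $\alpha(G)\ge 3$; if $G$ is perfect we are done, so suppose not. Then $G$ contains, as an induced subgraph, either an odd hole $C_{2n+1}$ with $2n+1\ge 5$ or an odd antihole $\overline{C}_{2m+1}$ with $2m+1\ge 7$, and the two things to prove are that the antihole case is impossible and that in the hole case $G$ is a clique expansion $C_{2n+1}[k_1,\ldots,k_{2n+1}]$ with $2n+1\ge 7$.

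For the antihole: if $H\cong\overline{C}_{2m+1}$ is induced in $G$ then $\alpha(H)=2$, so $V(G)\ne V(H)$ and by connectivity some $u\notin V(H)$ has a neighbour in $H$. For every $v\in N(u)\cap V(H)$ the graph $N(v)\cap V(H)$ is $\overline{C}_{2m+1}$ with three consecutive vertices removed and still has independence number $2$, so claw-freeness at $v$ (the claw being $v$, $u$, and an independent pair of $N(v)\cap V(H)$ that $u$ misses) forces $N(u)$ to meet every such pair; a short extra step using bull-freeness then pins $N(u)\cap V(H)$ to the form $V(H)\setminus\{j-1,j,j+1\}$, so $H+u$ still has independence number $2$. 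Applying this to every vertex that reaches $H$ gives $\alpha(G)=2$, a contradiction; hence $G$ has an odd hole.

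The hole case is the heart. Let $C=v_1\cdots v_{2n+1}$ be a \emph{shortest} odd hole and set $B_i=\{v_i\}\cup\{u\notin V(C):N(u)\cap V(C)=\{v_{i-1},v_i,v_{i+1}\}\}$. The structural claim is $G=C_{2n+1}[|B_1|,\ldots,|B_{2n+1}|]$, which I would prove through attachment lemmas for $u\notin V(C)$:
\begin{enumerate}[label=(\alph*)]
    \item $u\sim v_i$ forces $u\sim v_{i-1}$ or $u\sim v_{i+1}$ (else $\{v_{i-1},v_{i+1},u\}\subseteq N(v_i)$ is a claw), so $N(u)\cap V(C)$ is a union of arcs of length $\ge 2$;
    \item when $2n+1\ge 7$, $N(u)\cap V(C)$ is a single arc of length $2$ or $3$ --- two arcs, or an arc of length $\ge4$, yield an induced bull (a triangle on one arc, with one horn on a neighbouring arc and one horn at an endpoint of a gap or an arc) or a shorter odd hole;
    \item every vertex of $G$ has a neighbour on $C$ --- a vertex at distance $2$ from $C$, together with its neighbour near $C$ and suitable vertices of $C$, yields a claw or a bull --- so the $B_i$ partition $V(G)$;
    \item the $B_i$ are cliques, consecutive blobs are completely joined, and non-consecutive blobs are anticomplete, using claw/bull arguments together with the minimality of $C$ (to forbid long chords and shorter odd holes).
\end{enumerate}
The length $2n+1=5$ must be treated on its own, since on $C_5$ the obstructions in (b) degenerate into $5$-wheels and $5$-wheels-minus-a-spoke, which are themselves $(bull,claw)$-free; but those all have independence number $2$, so under $\alpha(G)\ge 3$ the shortest odd hole cannot have length $5$. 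With the structural claim established, $G=C_{2n+1}[k_1,\ldots,k_{2n+1}]$, and since $\alpha(C_{2n+1}[k_1,\ldots,k_{2n+1}])=n$, the hypothesis $\alpha(G)\ge 3$ gives $2n+1\ge 7$.

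The main obstacle is steps (b)--(d), together with the $C_5$ analysis: the systematic case-checking needed to show a vertex can meet a shortest odd hole only inside a blob or between two consecutive blobs, that no long chords survive, and that blobs are cliques completely joined in the cyclic pattern. This is exactly where bull-freeness is indispensable --- claw-freeness alone leaves precisely the wheel-like configurations (which have independence number $2$) open. I expect the antihole elimination to be comparatively routine once its attachment lemma is set up.
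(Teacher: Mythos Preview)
This theorem is not proved in the paper at all: it is quoted verbatim from~\cite{alpha3} as a known tool, with no argument given. So there is no ``paper's own proof'' to compare your sketch against. The paper does, however, also quote the two ingredients that assemble into a short proof: Lemma~\ref{lem:ben} (Ben Rebea) says that a claw-free graph with $\alpha\ge 3$ containing an odd antihole must contain an induced $C_5$, and Theorem~\ref{thm:claw} says that a connected $(bull,claw)$-free graph with $\alpha\ge 3$ is $C_5$-free, and that if it contains an induced $C_k$ with $k\ge 6$ then it is a clique expansion of $C_k$. Combined with the Strong Perfect Graph Theorem this gives the result in two lines: no odd antihole, so an odd hole; not $C_5$, so length $\ge 7$; hence clique expansion.

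Your sketch follows the same SPGT skeleton and your hole analysis (steps (a)--(d)) is essentially a direct re-proof of Theorem~\ref{thm:claw}(ii), which is fine. Two points deserve care. First, your antihole step contains an error: if $N(u)\cap V(H)=V(H)\setminus\{v_{j-1},v_j,v_{j+1}\}$ then $\{u,v_{j-1},v_j\}$ is independent (since $v_{j-1}v_j\notin E(\overline{C}_{2m+1})$), so $H+u$ does \emph{not} have independence number~$2$ and your induction toward $\alpha(G)=2$ breaks. The clean route here is exactly Ben Rebea's lemma: claw-freeness alone, with $\alpha\ge 3$, forces an induced $C_5$ out of any odd antihole --- you do not need to control attachments precisely, only to exhibit the $C_5$ and then contradict your $C_5$-free claim. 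Second, your $C_5$ elimination (``those all have independence number~$2$'') is the real content of Theorem~\ref{thm:claw}(i) and is not covered by saying that $5$-wheels and $5$-wheels-minus-a-spoke have $\alpha=2$: you must show that \emph{every} connected $(bull,claw)$-free graph containing an induced $C_5$ has $\alpha=2$, not merely the local configurations around the $C_5$. That argument (growing the $C_5$ to the whole graph while keeping $\alpha=2$) is where bull-freeness is genuinely used, and it is what your sketch is missing.
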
 

\begin{theorem}\label{thm:claw}\cite{alpha3}
    Let $G$ be a connected $(bull, claw)$-free graph. Then
    \begin{enumerate}[label=(\roman*)]
        \item if $G$ contains an independent set of size 3, then $G$ is $C_5$-free.
        \item if $G$ contains an induced cycle of length $k$ with $k \geq 6$, then $G$ is a clique expansion of $C_k$.
    \end{enumerate}
    
\end{theorem}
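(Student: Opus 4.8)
The plan is to handle the two assertions by quite different means: (i) follows almost immediately from the structure theorem already at hand, while (ii) is proved by a direct local analysis of how vertices attach to the long induced cycle. For (i), assume $\alpha(G)\ge 3$ and apply Theorem~\ref{thm:bullclaw2}: either $G$ is perfect, or $G$ is a clique expansion of an odd cycle $C_m$ with $m\ge 7$. If $G$ is perfect it cannot contain an induced $C_5$, since $C_5$ is an odd hole and is itself imperfect ($\chi(C_5)=3>2=\omega(C_5)$) while every induced subgraph of a perfect graph is perfect; one may also invoke Theorem~\ref{tSPGT} directly. If $G$ is a clique expansion of $C_m$, I would show that every hole of $G$ has length exactly $m\ge 7>5$: an induced cycle of length $\ge 4$ is triangle-free, hence meets each expanding clique in at most one vertex (two vertices of one clique are adjacent and share a neighbour in a consecutive clique, which would force a chord), so consecutive hole-vertices lie in cyclically consecutive cliques and the hole must wind once around $C_m$. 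Either way $G$ is $C_5$-free, which is exactly~(i).

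For (ii) I would fix the induced cycle $C=v_1v_2\cdots v_k$ with $k\ge 6$ and establish an \emph{arc lemma}: for every $w\notin V(C)$ with a neighbour on $C$, the set $N(w)\cap V(C)$ consists of exactly three consecutive vertices. This is proved in four steps, each exhibiting a forbidden subgraph. If $v_i\in N(w)$ but $v_{i-1},v_{i+1}\notin N(w)$, then $\{v_i;w,v_{i-1},v_{i+1}\}$ is a claw, so the neighbours on $C$ form arcs of length $\ge 2$. An arc of length exactly two $\{v_i,v_{i+1}\}$ gives the induced bull on $\{v_{i-1},v_i,v_{i+1},v_{i+2},w\}$ (triangle $v_iwv_{i+1}$ with pendants $v_{i-1},v_{i+2}$), and an arc of length $\ge 4$ containing $v_i,\dots,v_{i+3}$ with $v_{i-1}\notin N(w)$ gives the induced bull on $\{v_{i-1},v_i,v_{i+1},w,v_{i+3}\}$; the hypothesis $k\ge 6$ is exactly what makes the required pairs (such as $v_{i-1}$ and $v_{i+3}$) non-adjacent. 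Finally, two distinct arcs would let me take the endpoints $v_i,v_{i+2}$ of one arc together with a vertex of the other non-adjacent to both, giving a claw at $w$. Thus each attachment is a single length-three arc, and I assign $w$ to the block $A_i=\{v_i\}\cup\{w:N(w)\cap V(C)=\{v_{i-1},v_i,v_{i+1}\}\}$ indexed by the arc's centre.

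It remains to recover the clique-expansion structure. First, no vertex is at distance $\ge 2$ from $C$: a neighbour $z$ of $w\in A_i$ with $z\not\sim C$ yields the claw $\{w;z,v_{i-1},v_{i+1}\}$, so by connectivity $V(G)=\bigcup_i A_i$. Non-consecutive blocks are anticomplete by a direct argument: if $u\in A_i$ and $x\in A_j$ with $i,j$ non-consecutive were adjacent, then either $\{u;v_{i-1},v_{i+1},x\}$ is a claw (disjoint arcs) or the five vertices around their common cycle-neighbour form a bull (arcs overlapping in one vertex). The key device for the last two claims—each $A_i$ is a clique and consecutive blocks are complete—is \emph{representative swapping}: since $u\in A_i$ has arc exactly $\{v_{i-1},v_i,v_{i+1}\}$, replacing $v_i$ by $u$ yields another induced $k$-cycle $C''$. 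Applying the arc lemma to $C''$ forces any $w\in A_{i+1}$ to be adjacent to $u$ (else its neighbourhood on $C''$ is a forbidden length-two arc) and any $w\in A_i\setminus\{v_i\}$ to be adjacent to $u$ (else $v_{i-1}$ becomes an isolated neighbour, a claw). This gives completeness of consecutive blocks and cliqueness of each block, whence $G=C_k[|A_1|,\dots,|A_k|]$.

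I expect the arc lemma and the representative-swapping step to be the main obstacle. The lemma demands that the correct bull or claw be exhibited in each configuration and that the non-adjacencies be checked carefully against the bound $k\ge 6$ (several of them fail at $k=5$, which is precisely why part~(i) must be separated out). The swapping argument is the non-obvious idea, reducing cliqueness and consecutive-completeness back to the already-established impossibility of length-one and length-two arcs; by comparison, part~(i) is routine once Theorem~\ref{thm:bullclaw2} and the Strong Perfect Graph Theorem are in hand.
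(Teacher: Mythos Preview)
The paper does not prove Theorem~\ref{thm:claw}; it is quoted from~\cite{alpha3} as a preliminary result, so there is no in-paper argument to compare against. That said, two remarks on your proposal are in order.

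Your proof of~(i) is in danger of being circular. You invoke Theorem~\ref{thm:bullclaw2}, which is taken from the same source~\cite{alpha3}, and the natural route to that structure theorem is precisely via the two statements you are trying to establish: once one knows that $\alpha\ge 3$ forces $C_5$-freeness (your~(i)) and that a long induced cycle forces a clique expansion (your~(ii)), Theorem~\ref{thm:bullclaw2} follows from the Strong Perfect Graph Theorem together with Ben~Rebea's Lemma~\ref{lem:ben}. So deducing~(i) back from Theorem~\ref{thm:bullclaw2} likely assumes what you want to prove. A self-contained proof of~(i) should instead analyse directly how an independent triple can coexist with an induced $C_5$ in a $(bull,claw)$-free graph and derive a contradiction; this is a local argument of the same flavour as your arc lemma.

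Your proof of~(ii) is a correct direct argument and is essentially how such results are proved. One small gap: in ruling out long arcs you assume there is a non-neighbour $v_{i-1}$ on $C$, i.e.\ that the arc is proper. You should also dispose of the case $N(w)\cap V(C)=V(C)$, which is immediate since then $\{w;v_1,v_3,v_5\}$ is a claw (here $k\ge 6$ guarantees that $v_1,v_3,v_5$ are pairwise non-adjacent). With that addition the arc lemma and the representative-swapping step go through as you describe.
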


\begin{Lemma}\label{lem:ben}\cite{benrebea}
   If $G$ is a $claw$-free graph such that $\alpha (G)\geq 3$ and $G$ contains an odd antihole, then $G$ contains induced $C_5$.
\end{Lemma}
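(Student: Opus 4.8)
The plan is to argue by taking a shortest odd antihole in $G$ and reducing everything to it. Write this antihole as $A=\overline{C}_{2k+1}$ on vertices $a_0,\dots,a_{2k}$, where (reading indices modulo $2k+1$) $a_i\sim a_j$ iff $i-j\not\equiv\pm1$. If $2k+1=5$ then, since $\overline{C}_5\cong C_5$, the antihole itself is the desired induced $C_5$ and we are done. So the whole content lies in the case $k\ge 3$, where one should first note the purely arithmetic fact that $A$ alone is $C_5$-free: any five vertices of $C_{2k+1}$ induce a disjoint union of paths, never a $C_5$, so no five vertices of $A$ induce a $C_5$. Consequently the sought $C_5$ must use a vertex off the antihole, and (as the disconnected graph $\overline{C}_7\sqcup K_1\sqcup K_1$ shows, where $\alpha=4$ yet there is no $C_5$) one genuinely needs $G$ connected so that $\alpha(G)\ge 3$ can be felt near $A$; I would invoke connectedness at this point.

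Since $\alpha(A)=2$ while $\alpha(G)\ge 3$, the graph has vertices outside $A$, and the first real step is to control, for a vertex $v\notin A$, its attachment to $A$ using claw-freeness. Writing $S=\{i:v\sim a_i\}$ and $T=\{i:v\not\sim a_i\}$, I would show: if $T$ contains two cyclically consecutive indices $p,p+1$, then for every $i\in S$ the triple $\{v,a_p,a_{p+1}\}$ must fail to be independent inside $N(a_i)$, which forces $i\in\{p-1,p+2\}$ and hence $|S|\le 2$; but then, picking any neighbour $a_j$ of $v$, the block $A\setminus\{a_{j-1},a_j,a_{j+1}\}$ of $a_j$'s antihole-neighbours is an induced path on $2k-2\ge4$ vertices containing at most one neighbour of $v$, so two consecutive members of it are both non-neighbours of $v$ and, together with $v$, form a claw at $a_j$ — a contradiction. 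Therefore every $v\notin A$ with a neighbour in $A$ has its non-neighbour set $T$ equal to a stable set of $C_{2k+1}$ (no two consecutive indices).

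The second step is the construction of the $C_5$. The induced $P_4$'s of $A$ are exactly those of the form $a_i\,a_{i+2}\,a_{i-1}\,a_{i+1}$: the two endpoints are the consecutive pair $a_i,a_{i+1}$ and the two interior vertices are $a_{i+2},a_{i-1}$. Hence, if some mixed vertex $v$ (one with both a neighbour and a non-neighbour in $A$) satisfies $v\sim a_i,a_{i+1}$ and $v\not\sim a_{i+2},a_{i-1}$ for some $i$, then $v,a_i,a_{i+2},a_{i-1},a_{i+1}$ induce a $C_5$; one verifies the five required edges and five required non-edges directly from the index arithmetic. Such a pattern exists exactly when $v$ has an $S$-run of length $2$ around the cycle, i.e. a window $T\,S\,S\,T$, which I would extract from the stable set $T$ by starting at a non-neighbour $a_t$ (so $a_{t\pm1}\in S$) and walking along the cycle.

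The main obstacle is precisely the two loose ends of the last two paragraphs. First, guaranteeing a mixed vertex at all: I must rule out that every vertex off $A$ is complete or anticomplete to $A$, and here is exactly where $\alpha(G)\ge 3$ and connectivity must be combined, since within $A\cup\{v\}$ a single mixed vertex always keeps $\alpha=2$ (its non-neighbours in $A$ are non-consecutive, hence mutually adjacent in $A$), so the independent triple has to be traced through the component of $A$ to expose a mixed vertex with a non-neighbour in a usable position. Second, disposing of the degenerate attachment in which every $S$-run has length $1$ or $\ge3$, so that no $TSST$ window is immediately visible; there I expect either to re-choose $v$ among the several vertices supplied by the independent triple, or to use two non-adjacent external vertices sharing a common antihole-non-neighbour to close a five-cycle through two vertices of $A$. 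Making this final case analysis clean while keeping every neighbourhood claw-free is the part I expect to be the most delicate.
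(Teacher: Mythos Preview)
The paper does not prove this lemma at all: it is stated with a citation to Ben Rebea's thesis and used as a black box to derive Corollary~\ref{lem:alpha2}. So there is nothing to compare your argument against.

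Two comments on your proposal itself. First, your counterexample $\overline{C}_7\sqcup K_1\sqcup K_1$ is correct: it is claw-free, has $\alpha=4$, contains $\overline{C}_7$, and contains no induced $C_5$. So the lemma as literally stated in the paper is false, and connectedness (which is present in the corollary the paper actually uses) must be assumed. You are right to flag this. Second, your structural step---that any $v\in N(A)$ has its non-neighbour set $T$ independent in $C_{2k+1}$---is clean and correct, and the $C_5$ via a $TSST$ window is exactly the right picture. But the two ``loose ends'' you name are genuine: a mixed vertex with $|T|=1$ (e.g.\ $v$ adjacent to all of $A$ but one vertex) has no $TSST$ window and creates no $C_5$ with four vertices of $A$, so one really is forced either to pass to a different mixed vertex supplied by the independent triple, or to use two outside vertices. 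Your sketch stops short of carrying this out; it is an outline with correctly identified gaps rather than a proof. If you want to finish it, the cleanest route is to first argue (from connectedness, claw-freeness, and $\alpha\ge 3$, along the lines you indicate) that there is a mixed vertex $v$ with $|T|\ge 2$, and then observe that for $|T|\ge 2$ the parity of $2k+1$ together with $T$ being independent in $C_{2k+1}$ forces some $S$-run of length exactly~$2$.
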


Combining Theorem \ref{thm:claw} and Lemma \ref{lem:ben} we obtain the following corollary.

\begin{corollary}\label{lem:alpha2}
Let $G$ be a connected $(bull, claw)$-free graph. If $G$ contains an odd antihole, then $\alpha (G) = 2.$ \qed
\end{corollary}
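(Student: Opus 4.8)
The plan is to argue by contradiction, playing Theorem~\ref{thm:claw}(i) against Lemma~\ref{lem:ben}. Suppose $G$ is a connected $(bull,claw)$-free graph that contains an odd antihole but has $\alpha(G)\geq 3$. Since $G$ is in particular claw-free and $\alpha(G)\geq 3$, Lemma~\ref{lem:ben} (Ben Rebea) applies and produces an induced $C_5$ in $G$. On the other hand, $\alpha(G)\geq 3$ simply means that $G$ contains an independent set of size~$3$, so Theorem~\ref{thm:claw}(i) asserts that $G$ is $C_5$-free. These two conclusions are contradictory, so the assumption $\alpha(G)\geq 3$ is untenable and $\alpha(G)\leq 2$.

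It then only remains to check the matching lower bound so as to get an equality rather than merely $\alpha(G)\le 2$: an odd antihole has at least $5$ vertices, and its complement is an odd cycle of length at least $5$, which is not a complete graph; hence the antihole itself contains a pair of non-adjacent vertices, and any graph $G$ containing an odd antihole has $\alpha(G)\geq 2$. Combining $\alpha(G)\leq 2$ with $\alpha(G)\geq 2$ yields $\alpha(G)=2$.

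There is essentially no obstacle in this argument; the only points requiring (minimal) care are that ``$(bull,claw)$-free'' entails ``claw-free'', so that Lemma~\ref{lem:ben} is legitimately applicable, and the trivial observation giving $\alpha(G)\geq 2$ so that the statement can be phrased as an exact equality.
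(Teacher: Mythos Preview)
Your argument is correct and matches the paper's approach exactly: the paper simply states that the corollary follows by combining Theorem~\ref{thm:claw} and Lemma~\ref{lem:ben}, and you have spelled out precisely that combination. Your additional remark justifying the lower bound $\alpha(G)\geq 2$ is a welcome detail that the paper leaves implicit.
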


\section{Lemmas}

Let $G$ be a $(bull, chair)$-free graph such that $G$ contains an induced odd antihole $\overline{Q}=v_1\ldots v_p$.

\begin{Lemma}\label{notwocons}
    If a vertex $w \in G\setminus \overline{Q}$ is adjacent to $\overline{Q}$, then $w$ has no two consecutive non-neighbors in $\overline{Q}$.
\end{Lemma}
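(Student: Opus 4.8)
The plan is to argue by contradiction: suppose $w\in G\setminus\overline{Q}$ is adjacent to $\overline{Q}$ but has two consecutive non-neighbors $v_i, v_{i+1}$ in $\overline{Q}$ (indices mod $p$). Since $\overline{Q}=v_1\ldots v_p$ is an odd antihole, in the complement $Q$ the vertices $v_i v_{i+1}$ form an edge, while in $\overline{Q}$ each $v_j$ is non-adjacent only to $v_{j-1}$ and $v_{j+1}$ and adjacent to everything else. First I would record the local picture around the pair $v_i, v_{i+1}$: the four vertices $v_{i-1}, v_i, v_{i+1}, v_{i+2}$ induce a path $v_{i-1} v_{i+1} v_i v_{i+2}$ in $\overline{Q}$ (using $p\geq 7$ so these are four distinct vertices, with $v_{i-1}v_{i+2}$ also an edge, actually giving a $C_4$; I'd pick the induced $P_4$ or $C_4$ that is most convenient). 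Since $w$ is adjacent to $\overline{Q}$, it has at least one neighbor; I then split into cases according to whether $w$ is adjacent to $v_{i-1}$ and/or $v_{i+2}$.

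The core of the argument is that forbidding both a $bull$ and a $chair$ is very restrictive once $w$ misses the adjacent pair $v_i,v_{i+1}$. If $w$ is adjacent to $v_{i-1}$: then since $v_{i-1}v_i\notin E$ wait — in $\overline Q$, $v_{i-1}$ is non-adjacent to $v_i$ (consecutive) but adjacent to $v_{i+1}$? No: $v_{i-1}$ and $v_{i+1}$ are not consecutive, so they ARE adjacent in $\overline Q$. Let me instead use the clean statement: in $\overline Q$, $v_a v_b\in E$ iff $a-b\not\equiv \pm 1\pmod p$. So with $w\not\sim v_i$, $w\not\sim v_{i+1}$, consider $v_{i-1}$ (adjacent to $v_{i+1}$, not to $v_i$) and $v_{i+2}$ (adjacent to $v_i$, not to $v_{i+1}$), and note $v_{i-1}\sim v_{i+2}$. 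The key sub-case is when $w$ is adjacent to $v_{i-1}$: then $\{w,v_{i-1},v_{i+1},v_i\}$ — $w\sim v_{i-1}$, $w\not\sim v_{i+1}$, $w\not\sim v_i$, $v_{i-1}\sim v_{i+1}$, $v_{i-1}\not\sim v_i$, $v_{i+1}\sim v_i$ — this is a $P_4$, and extending by $v_{i+2}$ (adjacent to $v_i$ and $v_{i-1}$, not to $v_{i+1}$) one should get either a $bull$ or a $chair$ depending on the edge $w v_{i+2}$. Symmetrically if $w\sim v_{i+2}$. The remaining case is $w\not\sim v_{i-1}$ and $w\not\sim v_{i+2}$, so $w$ has a neighbor $v_j$ with $j\notin\{i-1,i,i+1,i+2\}$; then $v_j$ is adjacent to all of $v_{i-1},v_i,v_{i+1},v_{i+2}$ except possibly one consecutive vertex, and $\{w,v_j,v_{i-1},v_i,v_{i+1}\}$ or a similar five-set should be forced to contain a $bull$ or a $chair$. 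I would organize this as a short case analysis, in each case exhibiting the forbidden induced subgraph explicitly by listing the five vertices and checking the adjacencies against the antihole rule.

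I expect the main obstacle to be the bookkeeping of the small-$p$ boundary case $p=7$ (where some of the vertices $v_{i-2},v_{i+3}$ may coincide or be consecutive to others), and making sure that in the case $w\not\sim v_{i-1}, w\not\sim v_{i+2}$ the chosen neighbor $v_j$ together with $v_{i-1},v_i,v_{i+1},v_{i+2}$ really induces the claimed $bull$ or $chair$ and not some other configuration — this may require a further split on which of $v_{i-1},v_i,v_{i+1},v_{i+2}$ is the (at most one) consecutive neighbor of $v_j$. Since all non-edges in an antihole are between consecutive vertices, each auxiliary vertex $v_j$ behaves almost like a universal vertex to the local window, which keeps every case finite and mechanical; the write-up should be a page of explicit five-vertex certificates.
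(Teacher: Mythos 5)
Your overall strategy is the paper's: exhibit the five-set $\{w, v_{i-1}, v_i, v_{i+1}, v_{i+2}\}$ as an induced chair or bull according to whether $wv_{i+2}$ is an edge, and that certificate is indeed correct (chair if $wv_{i+2}\notin E(G)$, bull if $wv_{i+2}\in E(G)$). But two things need fixing. First, your local adjacency bookkeeping is off: since $v_i$ and $v_{i+1}$ are consecutive on the antihole, they are \emph{non}-adjacent in $\overline{Q}$, so your claim ``$v_{i+1}\sim v_i$'' is false, $\{w,v_{i-1},v_i,v_{i+1}\}$ is not a $P_4$ (it is $P_3+K_1$), and the window $\{v_{i-1},v_i,v_{i+1},v_{i+2}\}$ induces the path $v_{i+1}\,v_{i-1}\,v_{i+2}\,v_i$, not a $C_4$. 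These slips do not affect the final five-vertex certificate, which you describe correctly, but they would have to be cleaned up in a write-up.

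Second, and more substantively, your third case ($w\not\sim v_{i-1}$ and $w\not\sim v_{i+2}$) is a genuine gap: the set $\{w,v_j,v_{i-1},v_i,v_{i+1}\}$ you propose there need not contain a bull or a chair. For instance, if $v_j$ is adjacent to all three of $v_{i-1},v_i,v_{i+1}$, the induced graph is the triangle $v_jv_{i-1}v_{i+1}$ with \emph{both} pendants $w$ and $v_i$ attached to the same vertex $v_j$, which is neither a bull nor a chair. The missing idea --- and the reason the paper's proof is two lines --- is to take a \emph{maximal} block of consecutive non-neighbors of $w$: since $w$ has at least one neighbor on $\overline{Q}$, you may relabel so that $wv_{i-1}\in E(G)$ while $v_i,v_{i+1}$ are the first two non-neighbors of the block, which makes your third case vacuous and reduces everything to the sub-case you did handle correctly.
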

\begin{proof}
      Suppose $w$ has $\ell$ consecutive non-neighbors $v_i, \ldots, v_{i+\ell-1}$, where $1<\ell<7$, and $wv_{i-1} \in E(G)$. Then,  the set $\{w, v_{i-1}, v_{i}, v_{i+1}, v_{i+2}\}$ induces a $chair$, if $w$ is not adjacent to $v_{i+2}$, or a $bull$, if it is (see Fig. \ref{pic:anti_a}).
\end{proof}

\begin{Lemma}\label{noN2}
    Let a vertex $w \in N_2(\overline{Q})$ be adjacent to a vertex $w' \in N(\overline{Q})$. Then $w'$ is adjacent to all vertices of $\overline{Q}$.
\end{Lemma}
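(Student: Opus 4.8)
The plan is to argue by contradiction: assume $w'$ misses some vertex of $\overline{Q}$. Since $w'\in N(\overline{Q})$ it has at least one neighbour in $\overline{Q}$, so fix a non-neighbour $v_j$ of $w'$. As $w'\in G\setminus\overline{Q}$ is adjacent to $\overline{Q}$, Lemma~\ref{notwocons} applies to $w'$, hence $w'$ has no two consecutive non-neighbours in $\overline{Q}$; in particular $w'v_{j-1},\,w'v_{j+1}\in E(G)$. I will also use that $w\in N_2(\overline{Q})$ has no neighbour at all in $\overline{Q}$, that $ww'\in E(G)$, and that inside the odd antihole $\overline{Q}$ two vertices $v_a,v_b$ are adjacent precisely when $a\not\equiv b\pm 1\pmod p$ (valid for odd $p\ge 5$). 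The heart of the argument is then a case split on the single edge $w'v_{j+2}$.

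If $w'v_{j+2}\in E(G)$, I claim $\{w,w',v_j,v_{j+1},v_{j+2}\}$ induces a $chair$ with centre $w'$, leaves $w$ and $v_{j+1}$, and subdivided arm $w'\,v_{j+2}\,v_j$. Indeed the only edges among these five vertices are $ww'$, $w'v_{j+1}$, $w'v_{j+2}$ and $v_jv_{j+2}$: the pairs $v_jv_{j+1}$ and $v_{j+1}v_{j+2}$ are non-edges (consecutive indices), $v_jv_{j+2}$ is an edge (distance-$2$ indices), $w'v_j$ is a non-edge by the choice of $v_j$, and $w$ sees none of the $v$'s.

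If $w'v_{j+2}\notin E(G)$, I claim $\{w,w',v_{j-1},v_{j+1},v_{j+2}\}$ induces a $bull$, with triangle $\{w',v_{j-1},v_{j+1}\}$ and pendant edges $ww'$ and $v_{j+2}v_{j-1}$. Here $v_{j-1}v_{j+1}$ and $v_{j-1}v_{j+2}$ are edges since their indices differ by $2$ and by $3$, neither congruent to $\pm1\bmod p$; while the remaining pairs are non-edges ($v_{j+1}v_{j+2}$ are consecutive, $w'v_{j+2}$ is excluded by the case assumption, and $w$ has no neighbour in $\overline{Q}$).

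In either case $G$ contains an induced $bull$ or $chair$, contradicting the standing hypothesis that $G$ is $(bull,chair)$-free; hence $w'$ is adjacent to every vertex of $\overline{Q}$. The only genuine obstacle is isolating the correct two five-vertex sets: the obvious candidate $\{w,w',v_{j-1},v_j,v_{j+1}\}$ fails because $v_j$ ends up isolated in it, and the fix is to route the forbidden configuration through $v_{j+2}$ (which is adjacent to $v_j$ at distance $2$), the value of the edge $w'v_{j+2}$ deciding whether one lands on the chair or on the bull. The remaining work — checking that the five chosen vertices are pairwise distinct and reading off the adjacencies from the antihole structure — is routine for odd $p\ge 5$.
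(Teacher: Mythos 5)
Your proof is correct and follows essentially the same strategy as the paper's: derive a contradiction by exhibiting a forbidden $5$-vertex induced subgraph on $\{w,w'\}$ together with three antihole vertices, using Lemma~\ref{notwocons} to control the neighbours of $w'$ on $\overline{Q}$. The only difference is cosmetic: the paper uses the oddness of $p$ to find two consecutive neighbours $v_i,v_{i+1}$ of $w'$ preceded by a non-neighbour and exhibits a single \emph{chair} (which coincides with your Case~1 configuration), whereas you start from an arbitrary non-neighbour $v_j$ and handle the extra possibility $w'v_{j+2}\notin E(G)$ by producing a \emph{bull} instead.
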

\begin{proof}
    Suppose $uw \in E(G)$. By Lemma \ref{notwocons}, $w$
    must have at least two consecutive neighbors $v_i, v_{i+1}$ on $\overline{Q}$ (since $\overline{Q}$ is odd). Without loss of generality $v_{i-1}w \notin E(G)$. Then the set $\{u, w, v_{i-1}, v_i, v_{i+1}\}$ induces a $chair$ (see Fig. \ref{pic:anti_b}).
\end{proof}

    \begin{figure}[htb]
    \centering    
    \begin{subfigure}{0.35\textwidth}
    \centering  
    \includegraphics[width=0.8\linewidth]{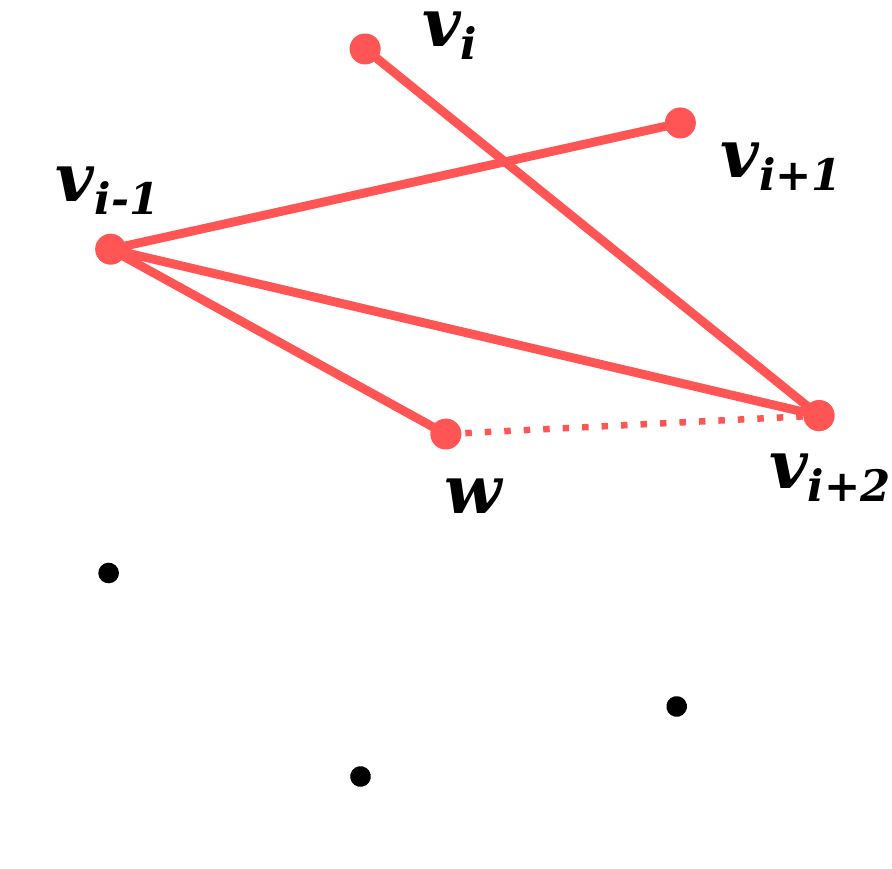}
    \caption{\label{pic:anti_a}}    
    \end{subfigure}
    \qquad
    \begin{subfigure}{0.35\textwidth}
    \centering  
    \includegraphics[width=0.8\linewidth]{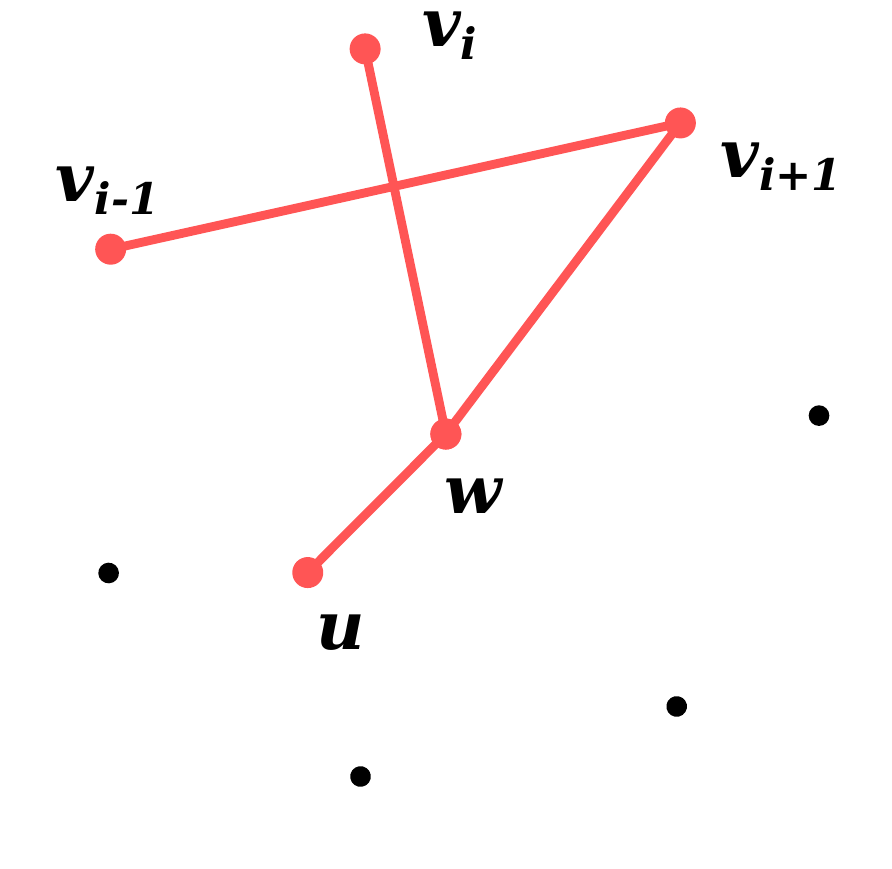}
    \caption{\label{pic:anti_b}}    
    \end{subfigure}
    \caption{Induced subgraphs constructed in the proofs of Lemma \ref{notwocons} and \ref{noN2}.}\label{pic:anti}
\end{figure}

\begin{Lemma}\label{d4}
    Let $w \in N(\overline{Q})$ be a vertex such that $d_{\overline{Q}}(w)=4$. Then $G$ contains an induced cycle $C_5$.
\end{Lemma}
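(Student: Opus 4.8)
We sketch how the proof would go. The plan is first to constrain the length of $\overline{Q}$ using Lemma~\ref{notwocons}, and then, in the one genuinely interesting case, to exhibit an induced $C_5$ explicitly.

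Write $p=|V(\overline{Q})|$, so $p\geq 5$ is odd. Since $d_{\overline{Q}}(w)=4>0$, the vertex $w$ is adjacent to $\overline{Q}$, so Lemma~\ref{notwocons} applies: the $p-4$ non-neighbours of $w$ in $\overline{Q}$ contain no two vertices that are consecutive on the cycle underlying $\overline{Q}$. As an independent set of the $p$-cycle has at most $\lfloor p/2\rfloor$ vertices, this gives $p-4\leq\lfloor p/2\rfloor$, hence $p\in\{5,7\}$; in particular no such $w$ can exist when $p\geq 9$. If $p=5$ we are done immediately, since $\overline{C}_5\cong C_5$, so $\overline{Q}$ is itself an induced $C_5$ of $G$.

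It remains to treat $p=7$. Label the vertices $v_1,\dots,v_7$ so that $v_iv_j\in E(\overline{Q})$ exactly when $i-j\not\equiv\pm 1\pmod{7}$. The three non-neighbours of $w$ in $\overline{Q}$ form an independent set of size $3$ in the $7$-cycle, and the only such set up to rotation is $\{v_1,v_3,v_5\}$ (the cyclic gaps must be $2,2,3$); so after relabelling we may assume $w$ is non-adjacent to $v_1,v_3,v_5$ and adjacent to $v_2,v_4,v_6,v_7$. I would then check that $\{w,v_1,v_5,v_6,v_7\}$ induces a $C_5$ on the cycle $w\,v_6\,v_1\,v_5\,v_7\,w$: the edges $v_6v_1$, $v_1v_5$, $v_5v_7$ are present in $\overline{Q}$ (the corresponding vertices are at distance $2,3,2$ on the $7$-cycle), the edges $wv_6$, $wv_7$ hold by the choice of $w$, while the five remaining pairs $wv_1$, $wv_5$, $v_6v_5$, $v_6v_7$, $v_1v_7$ are non-edges (the first two by the choice of $w$, the last three because those vertices are consecutive on the $7$-cycle). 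This gives the required induced $C_5$.

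The only place where care is needed is the $p=7$ case: one must verify that Lemma~\ref{notwocons} really leaves a single configuration of the neighbourhood of $w$ on $\overline{Q}$ up to the symmetry of $\overline{C}_7$, after which locating the $C_5$ is just a short adjacency check; beyond what is already encoded in Lemma~\ref{notwocons}, no further use of $(bull,chair)$-freeness seems necessary.
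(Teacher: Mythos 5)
Your proof is correct and follows essentially the same route as the paper: after dismissing $p=5$ (where $\overline{Q}$ is itself a $C_5$), you locate two consecutive neighbours of $w$ on the underlying cycle flanked by non-neighbours and take $w$ together with that window of four consecutive cycle vertices, which is exactly the paper's induced $C_5$ (your set $\{w,v_5,v_6,v_7,v_1\}$ is the paper's $\{w,v_{i-1},v_i,v_{i+1},v_{i+2}\}$ with $i=6$). Your version is in fact slightly more careful, since you justify via the independent-set count both why $p\leq 7$ and why such a flanked pair of neighbours must exist, steps the paper leaves implicit.
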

\begin{proof}
    Suppose that $\overline{Q} \neq \overline{C_5}=C_5$ and $d_{\overline{Q}}(w)=4$. By Lemma \ref{notwocons}, $w$ must have two neighbors $v_i, v_{i+1}$ such that $wv_{i-1}, wv_{i+2}\notin E(G)$. Then the set of vertices $\{w, v_i, v_{i+2}, v_{i-1}, v_{i+1}\}$ induces $C_5$.
\end{proof}

\section{Proof of Theorem \ref{cliqueexpansion}}

By the definition of the clique expansion, the condition \ref{1.} is necessary. It is not difficult to see that condition \ref{2.} is necessary as well. Namely, let us denote by $K^i$ the $i$-th clique of the expansion and assume, without loosing generality, that $K^1$ is colored with colors $1, \ldots, k_1$. Of course, $G$ is $k$-colorable if and only if there exists such a coloring of $G-K^{2n+1}$ that at least $s$ colors from $\{1, \ldots, k_1\}$ are repeated on $K^{2n}$, where 
    $$
    k_1 + k_{2n} - s +k_{2n+1} = k
    $$
    and therefore $s=k_1+k_{2n}+k_{2n+1}-k$. Let us say that if $s \leq 0$, then our task becomes trivial - the $CC$ algorithm starting from $K^1$ and cut at $K^{2n}$ can be completed to a proper coloring. Thus, we can assume that $k_i +k_{i+1} +k_{i+2}> k$ for any $i$.
    
   How many colors from $K^1$ can be repeated at most on $K^{2n}$? Consider a simplified problem, how many colors from $K^i$ can be repeated on $K^{i+3}$? If $S \subset c(K^i)$ is a set of colors that we would like to repeat (if possible) on $K^{i+3}$, then $K^{i+2}$ contains at least $|S|+k_{i+2}-(k-k_{i+1})$ colors from $S$ (note that we always have $k_{i+2}\geq |S|+k_{i+2}-(k-k_{i+1})$, otherwise $k < |S| + k_{i+1} \leq k_i +k_{i+1}$, which contradicts the assumption). Therefore, $K^{i+3}$ contains no more than $|S| - (|S|+k_{i+2}-(k-k_{i+1}))= k - k_{i+1}- k_{i+2}$ colors from $S$.

    Thus, on $K^4$ we can repeat at most $k-k_2-k_3$ desired colors, on $K^6$ additionally no more than $k-k_4-k_5$, and so on. Finally, on $K^{2n}$ we can repeat no more than $(n-1)k- k_2 - \ldots- k_{2n-1} $ colors from $\{1, \ldots, k_1\}$. In order to obtain a proper coloring, we need at least $s$ colors repeated, so we obtain a necessary condition
    $$
    s = k_1 + k_{2n} +k_{2n+1}-k \leq (n-1)k - k_2 - \ldots - k_{2n-1}
    $$
    equivalent to \ref{2.}.

   It is easy to show that conditions \ref{1.} and \ref{2.} are sufficient. Let us start coloring cliques with $k$-CC algorithm and let $l \in \{1, \ldots, n\}$ be the smallest index such that $c(K^{2l})$ contains colors $m+1, \ldots, m+p$, where $p \geq s$ and $m+p \leq k_1$. Such an index exists by the condition \ref{2.}. Then, we can color every clique $K^{2j+1}$, $l \leq j <n $, with consecutively increasing colors $1, 2, \ldots, m$ and then (if $k_{2j+1}>m$) with consecutively decreasing colors $k, k-1, \ldots$. Every clique $K^{2j}$, $l<j \leq n$, we color with consecutively increasing colors $m+1, m+2, \ldots$. It is easy to see that $K^{2n}$ has at least $p$ common colors with $K^1$, thus, we can always color $K^{2n+1}$ with remaining colors.

\section{Proof of Theorem \ref{thm:bullclaw}}

Let us point out that for perfect graphs Theorem \ref{thm:bullclaw} is trivially true. Therefore, we can restrict our attention to non-perfect graphs. Let $G$ be a $(bull, claw)$-free graph. If $G$ is non-perfect, it must contain an odd antihole or an odd cycle of length at least $5$.

Note that by Corollary \ref{lem:alpha2} this leaves us with only two possible cases.
\subsection{$G$ contains an induced odd cycle of length at least $7$ and $\alpha(G)\geq 3$}

Let us recall that by Theorem \ref{thm:bullclaw2}, the graph $G$ is a clique expansion of the cycle. The easy corollary of Theorem \ref{cliqueexpansion} is that if $G^*=C[k_1, \ldots, k_p]$ is a clique expansion of an odd cycle of length at least 7, then either  $G^*$ contains $K_5= M_4\oplus K_1$ or $G^*=C[2,2, \ldots, 2,1,3,1, 3, \ldots, 1]$ or $G^*=C[2, \ldots, 2, 1]$, or $G^*$ is $4$-colorable. 

\subsection{$G$ contains an odd antihole, so $\alpha(G)=2$}

Let $\overline{Q} = v_1 v_2 \ldots v_p$ be an odd antihole and $N_2(\overline{Q})$ be a set of all the vertices of distance~$2$ from $\overline{Q}$. Let us point out two simple facts.

\begin{fact}\label{Qdom}
    $\overline{Q}$ is a dominating set in $G$.
\end{fact}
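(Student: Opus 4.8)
The plan is to show that every vertex of $G$ either lies on $\overline Q$ or has a neighbour on $\overline Q$; since $\alpha(G)=2$, any vertex missing $\overline Q$ entirely would create a large independent set. Concretely, suppose for contradiction that some vertex $u\in V(G)\setminus\overline Q$ has no neighbour on $\overline Q$, i.e. $u$ is non-adjacent to all of $v_1,\dots,v_p$. Recall $p\ge 5$ is odd and $\overline Q$ is an antihole, so in $G$ the vertices $v_1,\dots,v_p$ induce the \emph{complement} of a $p$-cycle; in particular the ``non-edges'' of $\overline Q$ form a $p$-cycle, so among $v_1,\dots,v_p$ one can find an independent set (in $G$) of size $2$ — namely any two consecutive vertices $v_i,v_{i+1}$ of the defining cycle of $\overline Q$.

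The key step is then immediate: the set $\{u,v_i,v_{i+1}\}$ is independent in $G$, because $u$ is adjacent to none of the $v_j$ and $v_iv_{i+1}\notin E(G)$. This contradicts $\alpha(G)=2$. Hence no such $u$ exists, and since $G$ is connected every vertex not on $\overline Q$ has distance exactly $1$ from $\overline Q$, i.e. $\overline Q$ dominates $G$.

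I expect the only real point requiring care is the elementary observation that an odd antihole on $p\ge 5$ vertices contains two non-adjacent vertices in $G$ (equivalently $\overline Q\neq K_p$): this is true precisely because a $p$-cycle with $p\ge 4$ has at least one edge, so its complement $\overline Q$ has at least one non-edge; choosing such a non-adjacent pair $v_i,v_{i+1}$ is all that is needed. The rest is a one-line independent-set argument using $\alpha(G)=2$, so there is essentially no obstacle here — the Fact is genuinely ``simple'' as the text indicates, and its role is merely to set up the subsequent analysis of $N(\overline Q)$ and $N_2(\overline Q)$.
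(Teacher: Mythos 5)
Your argument is correct and is essentially the paper's own proof: the paper likewise takes a vertex with no neighbour on $\overline{Q}$ and observes that together with two consecutive vertices $v_1,v_2$ of the antihole's defining cycle (which are non-adjacent in $G$) it forms an independent set of size $3$, contradicting $\alpha(G)=2$ from Corollary \ref{lem:alpha2}. No issues.
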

\begin{proof}
Suppose there exists $w \in N_2(\overline{Q})$. Then we have an independent set $\{w, v_1, v_2\}$ of $3$ vertices, which contradicts Corollary \ref{lem:alpha2}.
\end{proof}

Now, using similar argument as above we can prove as follows.

\begin{fact}\label{konkr}\label{konkr2}
     Let $w, w' \in N(Q)$ and $N(w)\cup N(w') \neq Q$. Then $ww' \in E(G)$. \qed
\end{fact}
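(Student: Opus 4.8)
The statement asserts: if $w,w'\in N(\overline{Q})$ (neighbours of the antihole) and together they do not cover all of $\overline{Q}$—that is, some $v_j$ is adjacent to neither $w$ nor $w'$—then $ww'\in E(G)$. The plan is a direct contradiction argument in the spirit of Fact~\ref{Qdom}, exploiting that $\alpha(G)=2$ by Corollary~\ref{lem:alpha2}.

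First I would suppose for contradiction that $ww'\notin E(G)$. Since by hypothesis there is a vertex $v_j\in V(\overline{Q})$ with $wv_j\notin E(G)$ and $w'v_j\notin E(G)$, the three vertices $\{w,w',v_j\}$ are pairwise non-adjacent. Hence $\{w,w',v_j\}$ is an independent set of size $3$ in $G$, contradicting $\alpha(G)=2$ (which holds in this subsection because $G$ contains an odd antihole, cf.\ Corollary~\ref{lem:alpha2}). Therefore $ww'\in E(G)$, as claimed.

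There is essentially no obstacle here: the whole content is that a common non-neighbour of two non-adjacent vertices would produce an independent triple, which is forbidden once we are in the $\alpha(G)=2$ regime. The only point to be careful about is that the ``$Q$'' appearing in the statement is the same odd antihole $\overline{Q}$ fixed at the start of the subsection, so that $v_j$ genuinely lies in $G$; this is immediate from the setup. Thus the one-line proof is: $\{w,w',v_j\}$ would be independent, contradicting Corollary~\ref{lem:alpha2}. \qed
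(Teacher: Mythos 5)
Your proof is correct and is exactly the argument the paper intends: the Fact is stated with only the remark ``using similar argument as above,'' referring to the proof of Fact~\ref{Qdom}, and that argument is precisely your observation that a common non-neighbour $v_j\in \overline{Q}$ of two non-adjacent vertices $w,w'$ would give an independent triple $\{w,w',v_j\}$, contradicting $\alpha(G)=2$ from Corollary~\ref{lem:alpha2}. You also correctly read the (slightly abusive) notation $N(w)\cup N(w')\neq Q$ as referring to neighbourhoods restricted to the antihole.
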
 

Note that if $p \geq 11$, then $\overline{Q}$ contains $K_5$, but case $(iii)$ of Theorem \ref{thm:bullclaw}. Next, if $p=9$, then the graph contains $C[1,3,1,3,1]$, but then case $(iv)$ of Theorem \ref{thm:bullclaw}. So, we assume that $p=5$ or $p=7$ and we show the coloring of the graph $G$. 

\hspace{.5 cm}

First, assume $p=7$. We will show that if the graph $G$ is not $4$-colorable, then it does contain one of the exceptional subgraphs.

By Lemma \ref{notwocons} we know that for any $w \in N(\overline{Q})$ it holds $d_{\overline{Q}}(w) \geq 4$.  Of course, if there is a vertex $w \in V(G)$ such that $N_{\overline{Q}}(w)=\overline{Q}$, then we have exceptional graph $\overline{C}_7 \oplus K_1$. Moreover, if there is a vertex $w$ with $d_{\overline{Q}}(w)=4$, then by Lemma \ref{d4} we have an induced $C_5$, which was considered in the previous case. 

Thus, assume $d_{\overline{Q}}(w)\in \{5, 6\}$. If $N(\overline{Q})=\{w\}$, then $G$ is obviously $4$-colorable. Let $w' $ be another vertex in $N(\overline{Q})$. If $N_{\overline{Q}}(w) \cup N_{\overline{Q}}(w')=\overline{Q}$, then $G$ contains $C[1,3,1,3,1]$. Otherwise we must have $ww' \in E(G)$, and $G$ contains a complete graph $K_5$.

\hspace{.5 cm}

Finally let $p=5$. We can assume that $|V(G)| \leq 8$, otherwise we have point $(v)$ of Theorem \ref{thm:bullclaw}. By Lemma \ref{notwocons} we know that for any $w\in N(\overline{Q})$ we have $d_{\overline{Q}}(w)\in \{3, 4, 5\}$ and if $d_{\overline{Q}}(w)=3$, then non-neighbors of $w$ are non-consecutive. Let us define the following sets:
\begin{itemize}

\item[] $A_i= \{ v \in N(\overline{Q}) : N_{\overline{Q}}(v)=\{v_i, v_{i+1}, v_{i+3}\} \}$.

\item []$B_i= \{ v \in N(\overline{Q}) : N_{\overline{Q}}(v)=\{v_i, v_{i+1}, v_{i+2}, v_{i+3}\} \}$.  

\item []$C= \{ v \in N(\overline{Q}) : N_{\overline{Q}}(v)=\overline{Q} \}$.

\end{itemize}

Let also $A = \bigcup_{i=1}^p A_i$ and $B = \bigcup_{i=1}^p B_i$. By Fact \ref{Qdom}, we have $V(G) = Q \cup A \cup B \cup C$. 

\noindent \textit{Case 1.} A set $C\neq \emptyset$. Let $w\in C$. If there is another vertex $w' \in C$, then $ww' \notin E(G)$ (otherwise we have an exceptional graph $C_5 \oplus K_2$). Thus, we can give $w$ and $w'$ the same color $1$. If there is a third vertex $w'' \in N(Q)$, then note that $w'' \notin C$ (otherwise we have a $claw$, if $w,w', w''$ are not adjacent, or $C_5 \oplus K_2$, if they are). So we can color with $2$ the vertex $w''$ and one of its non-neighbors on the cycle $Q$. The rest of the cycle we color with $3$ and $4$. If $w', w'' \in A\cup B$, then note that either $ww' \notin E(G)$ or $ww'' \notin E(G)$ or $N_Q(w')\cup N_Q(w'') \neq Q$ (otherwise the graph $G$ contains $M_7 \oplus K_1$). Assume $ww' \notin E(G)$. Then we color $w, w'$ with $1$, $w''$ and one of its non-neighbors on the cycle with $2$ and the rest of the cycle with $3, 4$. Analogously for $ww'' \notin E(G)$. Assume $ww', ww'' \in E(G)$. As we said before, we have 
$N_Q(w')\cup N_Q(w'') \neq Q$. But then the vertices $w', w''$ must be adjacent and the graph $G$ contains $K_5$.\\
\textit{Case 2.} A set $C = \emptyset$. Let $w, w', w'' \in A \cup B$. Then each of those vertices has at least 1 non-neighbor on $Q$. Due to our assumption that $\delta(G) \geq 4$, the non-neighborhoods are disjoint. If possible, we take three vertices $u, u', u''$, non-neighbors of $w, w', w'$ respectively, such that only two of $u, u', u''$ are adjacent. We color $w, w', w''$ and their non-neighbors with colors $1, 2, 3$, respectively. The remained vertices of $Q$ are non-adjacent, so we can color them with $4$. If such a triple of non-neighbors does not exist, it means $w, w', w'' \in B$ and their two common neighbors are adjacent. Thus, without loss of generality, $ww' \notin E(G)$ (or we have $K_5$). Then we color $w, w'$ with $1$, $w''$ and its non-neighbor with $2$ and the rest of the cycle with $3$  and $4$, what finishes the proof.


\section{Proof of Theorem \ref{thm:bullchairC5}}

As before, the result is obvious for perfect graphs, so using Theorem \ref{tSPGT} we can split the proof into two cases -- when the graph $G$ contains an induced odd antihole and when it contains an induced odd hole. In this theorem, we also forbid $C_5$, so both hole and antihole must be of length at least $7$. 

\subsection{$G$ contains an odd antihole}

Let us assume the graph $G$ contains an odd antihole $\overline{Q}=v_1\ldots v_p$. As we have mentioned in the proof of Theorem \ref{thm:bullclaw}, if $p\geq11$, then $G$ contains $K_5$, and if $p=9$, then $G$ contains $C[1,3,1,3,1]$. So let $p=7$. We will see that our conclusions will be identical as in the respective part of the proof of Theorem \ref{thm:bullclaw}.

By Lemma \ref{noN2}, if there is a vertex $u\in N_2(\overline{Q})$, then it must have a neighbor $w$ such that $N_{\overline{Q}}(w)=\overline{Q}$. But if such a $w$ exists, then $G$ contains an exceptional subgraph $\overline{C}_7 \oplus K_1$. Moreover, by Lemma \ref{d4}, if $d_{\overline{Q}}(w)=4$, then $G$ contains $C_5$. So we can assume $N_2(\overline{Q})=\emptyset$ and $d_{\overline{Q}}(w) \in \{5,6\}$ for any $w\in N(\overline{Q})$ and we finish the coloring as in the proof of Theorem \ref{thm:bullclaw}.

\subsection{$G$ contains induced odd cycle of length at least $7$}

Let $Q=v_1\ldots v_p$ be an induced odd cycle of length at least 7. We will prove that if a $(bull, chair)$-free graph $G$ does contain such a cycle, then it satisfies some useful structural properties. A similar, but more general structural analysis of $(bull, chair)$-free graphs can be found in \cite{kkol}. 

All indices will be taken modulo $p$.

 \begin{fact}
     Let $w \in N(Q)$. Then either $N_Q(w)=\{v_{i-1}, v_i, v_{i+1}\}$ for some $i\in \{1,\ldots, p\}$ or $N_Q(w)=~Q$. 
 \end{fact}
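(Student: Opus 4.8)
The plan is to analyze the neighbourhood $N_Q(w)$ of an arbitrary vertex $w\in N(Q)$ using the fact that $G$ is $(bull,chair)$-free, together with the hypothesis that $Q=v_1\ldots v_p$ is a \emph{chordless} cycle of odd length $p\geq 7$. First I would record two easy observations. \textbf{(a)} $w$ cannot have two non-adjacent neighbours $v_i,v_j$ on $Q$ that are at distance $\geq 3$ along $Q$ (in both arcs): a shortest $v_i$–$v_j$ path inside $Q$ together with $w$ would contain an induced cycle of length $\geq 4$ with $w$ having exactly two neighbours on it, and picking four consecutive vertices of that path plus $w$ yields an induced $chair$ (the middle vertex of the path fragment adjacent only inside $Q$). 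So all neighbours of $w$ on $Q$ lie on a short arc — concretely, $N_Q(w)$ is contained in some set of consecutive vertices $\{v_{i-1},\dots,v_{i+1}\}$ unless $w$ is adjacent to almost everything. \textbf{(b)} If $w$ is adjacent to two consecutive vertices $v_i,v_{i+1}$ but not to $v_{i-1}$, and not to $v_{i+2}$, I would test the set $\{w,v_i,v_{i+1},v_{i-1},v_{i+2}\}$: since $p\geq 7$, $v_{i-1}$ and $v_{i+2}$ are non-adjacent and each has exactly one neighbour ($v_i$, resp.\ $v_{i+1}$) in the set, so this is a $chair$ centred at... — more precisely it is $S_{1,1,2}$ — a contradiction. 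This forces: whenever $w$ is adjacent to two consecutive vertices, the two "flanking" vertices behave symmetrically, and iterating pushes $w$ toward being adjacent to a full arc.

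The core case division I would then carry out on $d_Q(w)$ and on whether $N_Q(w)$ is an "interval" (set of consecutive vertices) of $Q$. From (a), $N_Q(w)$ is either contained in three consecutive vertices, or it misses only few vertices. If $N_Q(w)$ is a single vertex $v_i$: then $\{w,v_i,v_{i-1},v_{i+1}\}$ with any... actually a single neighbour is fine only if $p$ is small; for $p\geq 7$ one checks $\{w,v_i,v_{i-1},v_{i-2}\}\cup\{v_{i+1}\}$ type configurations — I expect a $chair$ or $bull$ to appear, ruling this out (indeed the stated conclusion has no such option, so $d_Q(w)\geq 2$). If $N_Q(w)=\{v_{i-1},v_{i+1}\}$ (two non-consecutive vertices at distance $2$): then $\{w,v_{i-1},v_i,v_{i+1}\}$ is a $K_3$-free configuration, and adding $v_{i+2}$ (non-adjacent to $w$, non-adjacent to $v_{i-1},v_i$) gives a $chair$. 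If $N_Q(w)\supseteq\{v_{i-1},v_i,v_{i+1}\}$ and $w$ is non-adjacent to both $v_{i-2}$ and $v_{i+2}$: I claim $N_Q(w)=\{v_{i-1},v_i,v_{i+1}\}$ exactly, because any further neighbour $v_j$ would be at distance $\geq 2$ from this arc and reintroduce case (a); and this is the first alternative of the Fact. The remaining possibility is that $w$ is adjacent to $v_{i+2}$ (or symmetrically $v_{i-2}$) for \emph{every} choice making it an endpoint of its arc of neighbours — i.e. $N_Q(w)$ has no "boundary", which on a cycle means $N_Q(w)=Q$, the second alternative. To make this last step rigorous I would argue: let $N_Q(w)$ be a proper nonempty subset; then it has a vertex $v_j\in N_Q(w)$ with $v_{j+1}\notin N_Q(w)$. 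Using (a) and the $bull$/$chair$ tests on $\{w,v_{j-1},v_j,v_{j+1},v_{j+2}\}$ (and on $\{w,v_{j-2},v_{j-1},v_j,v_{j+1}\}$) I would pin down that $w$ must be adjacent to exactly $v_{j-1},v_j$ on the "inside", hence $N_Q(w)=\{v_{j-1},v_j,\dots\}$ is a contiguous arc of length exactly $3$, giving the first alternative.

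The main obstacle I anticipate is handling the "long arc" subcase cleanly — i.e. showing that $w$ cannot be adjacent to a contiguous arc of length $4,5,\dots,p-1$ (everything except a nonempty proper arc of the complement), and cannot be adjacent to a non-contiguous set that still avoids the distance-$\geq 3$ obstruction of (a). For arcs: if $N_Q(w)=\{v_a,v_{a+1},\dots,v_b\}$ is contiguous with $3\le b-a+1\le p-1$, look at the two boundary vertices $v_{a-1},v_{b+1}\notin N_Q(w)$. If the arc has length $\geq 4$, then $\{w,v_a,v_{a+1},v_{a+2},v_{a-1}\}$ — with $v_{a-1}$ adjacent only to $v_a$ and $v_{a+1},v_{a+2}$ adjacent to $w$ — is a $bull$ (triangle $w v_{a+1} v_{a+2}$? need $v_{a+1}v_{a+2}\in E$, yes; pendants $v_a$? $v_a$ adjacent to $w$ and $v_{a+1}$, hmm) — I would verify precisely which five vertices give $bull$: likely $\{v_{a-1},v_a,w,v_b,v_{b+1}\}$ when the arc endpoints are "far enough", else a short-arc argument. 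The bookkeeping of exactly which $5$-vertex set is a $bull$ versus a $chair$ across the parameter ranges is where care is needed; everything else is a routine application of Lemma~\ref{notwocons}-style reasoning. I would organize the write-up by the value of $d_Q(w)$, disposing of $d_Q(w)\in\{1,2\}$, then $d_Q(w)=3$ (split: contiguous vs.\ the $\{v_{i-1},v_i,v_{i+2}\}$ pattern — the latter killed by a $chair$), then $d_Q(w)\geq 4$ and $<p$ (killed by $bull$/$chair$), leaving $d_Q(w)=p$, i.e.\ $N_Q(w)=Q$.
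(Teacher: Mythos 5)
Your overall plan (show that $N_Q(w)$ is a contiguous arc, then that a proper arc must have length exactly $3$) matches the paper's strategy, but the execution has a genuine gap at precisely the step you flag as delicate: ruling out a contiguous arc of neighbours of length $4\le \ell\le p-1$, and more generally any neighbour of $w$ beyond a $3$-arc. None of the witness sets you propose there actually works. The set $\{v_{a-1},v_a,w,v_b,v_{b+1}\}$ is an induced $P_5$ whenever $b-a\ge 2$ (there is no triangle in it), and $\{w,v_a,v_{a+1},v_{a+2},v_{a-1}\}$ has six edges, so it is neither a $bull$ nor a $chair$. The same defect infects your observation (a): four consecutive path vertices together with $w$ adjacent to only one of them induce a $P_5$, not a $chair$ (a $chair$ has a vertex of degree $3$); and (a) is in any case false as stated, since $N_Q(w)=Q$ gives two non-adjacent neighbours at distance $\ge 3$ in both arcs. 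The fix, which is what the paper does, is to attach the second pendant of the $bull$ to $w$ rather than to a cycle vertex: if $v_k,v_{k+1},v_{k+2}$ are the first three vertices of a maximal run of consecutive neighbours and $v_i$ is \emph{any} further neighbour of $w$ with $i\notin\{k-2,k-1,k,k+1,k+2\}$, then $\{v_{k-1},v_k,w,v_{k+1},v_i\}$ is a $bull$ (triangle $v_kwv_{k+1}$, pendant $v_{k-1}$ at $v_k$, pendant $v_i$ at $w$); the exceptional neighbour $v_{k-2}$ needs a separate $bull$. This single configuration kills both the long-arc case (take $v_i=v_{k+3}$) and the non-contiguous case, which is the heart of the proof and is missing from your write-up.

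Two smaller points. Your observation (b) is substantively correct but the subgraph $\{w,v_{i-1},v_i,v_{i+1},v_{i+2}\}$ with $w\sim v_i,v_{i+1}$ only is a $bull$ (it contains the triangle $wv_iv_{i+1}$), not an $S_{1,1,2}$; since both are forbidden the conclusion survives. In the case $N_Q(w)=\{v_{i-1},v_{i+1}\}$, however, your witness $\{w,v_{i-1},v_i,v_{i+1},v_{i+2}\}$ induces a $C_4$ with a pendant edge, which is neither forbidden graph; the correct $chair$ here is centred at a neighbour of $w$, e.g.\ $\{v_{i+1};\,w,v_i,v_{i+2};\,v_{i+3}\}$. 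The cases $d_Q(w)=1$ and the "no two consecutive neighbours" case, which you only sketch with "I expect", are handled the same way (a $chair$ centred at a neighbour $v_j$ of $w$ with legs $w$, $v_{j-1}$ and $v_{j+1}$--$v_{j+2}$), using that $Q$ is odd to find a neighbour flanked by two consecutive non-neighbours.
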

 \begin{proof}
     Let $v_k, \ldots, v_{k+\ell -1}$ be the longest sequence of consecutive neighbors of $w$ on the cycle and suppose $\ell < p$. We will show that in this case $\ell=3$ and $v_k, v_{k+1}, v_{k+2}$ are the only neighbors of $w$ on the cycle. 

     Firstly, suppose that $\ell=1$, that is, $w$ has no consecutive neighbors on $Q$. Then, since~$Q$ is odd, the graph $G$ must contain an induced $chair$. 

     Suppose now that $\ell=2$. Then the set $\{v_{k-1}, v_k, w, v_{k+1}, v_{k+2}\}$ induces a $bull$. 

\begin{figure}[htb]
    \centering    
    \begin{subfigure}{0.4\textwidth}
    \centering  
    \includegraphics[width=1.1\linewidth]{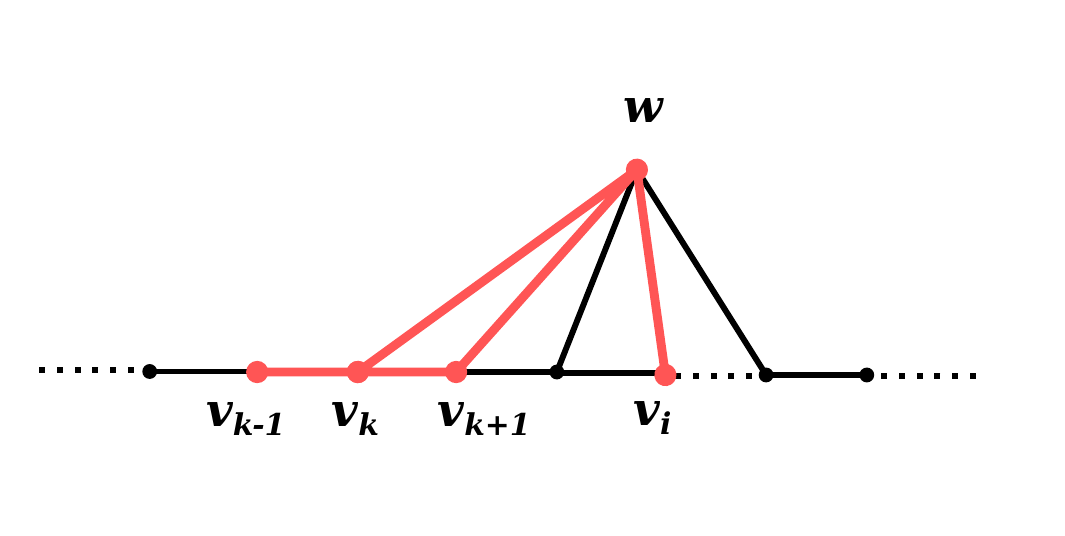} 
    \caption{\label{pic:claim_expansion_a}}
    \end{subfigure}
    \begin{subfigure}{0.4\textwidth}
    \centering  
    \includegraphics[width=1.1\linewidth]{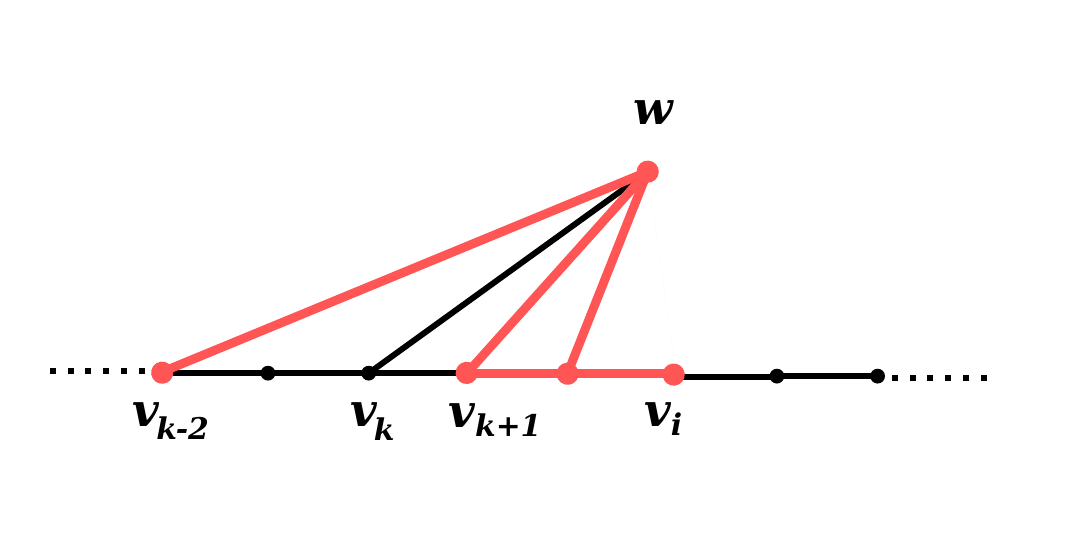}
    \caption{\label{pic:claim_expansion_b}}
    \end{subfigure}
    \caption{Induced subgraphs constructed in the proof of Fact \ref{thm:bullchair}}.\label{pic:thm_bullchair}
\end{figure}
    
         Finally, suppose $\ell \geq 3$ and $w$ has a neighbor $v_i$ among $\{v_4, \ldots, v_{k-2}\}$. Then the set $\{v_{k-1}, v_k, w, v_{k+1}, v_i\}$ (if there is $v_i$ such that $i\neq k-2$) -- see Figure \ref{pic:claim_expansion_a}) or the set $\{v_{k-2}, w, v_{k+1}, v_{k+2}, v_{k+3}\}$ (otherwise) induces a $bull$ -- see Figure \ref{pic:claim_expansion_b}. 
\end{proof}

\hspace{.5cm}

Now we can define the following sets:

\begin{itemize}
\item []$A_i= \{ v \in N(Q) : N_{Q}(v)=\{v_{i-1}, v_i, v_{i+1}\} \}$, and $A = \bigcup_{i=1}^p A_i$. 
\item []$D= \{ v \in N(Q) : N_{Q}(v)=Q \}$.
\end{itemize}

\begin{fact}
    $G[Q \cup A]$ is a clique expansion of the cycle $Q$.
\end{fact}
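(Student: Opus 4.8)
The plan is to show that $G[Q\cup A]$ satisfies the definition of a clique expansion $C_p[k_1,\dots,k_p]$ with $k_i = 1 + |A_i|$. The key point is to establish two things: first, that the sets $A_i$ partition $A$ (i.e.\ they are pairwise disjoint), and second, that the adjacency pattern inside $Q\cup A$ is exactly the one prescribed by the clique expansion, namely every $A_i\cup\{v_i\}$ is a clique, every pair of consecutive blocks $A_i\cup\{v_i\}$ and $A_{i+1}\cup\{v_{i+1}\}$ is completely joined, and non-consecutive blocks have no edges between them. Disjointness of the $A_i$ follows immediately from the previous Fact, since each $w\in N(Q)$ has a uniquely determined triple of cycle-neighbors (and vertices in $D$ are excluded from $A$ by definition, as $p\geq 7$ means $Q\neq N_Q(w)$ forces the three-vertex case).

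Next I would handle the edges. The nontrivial claims are: (a) if $w,w'\in A_i$ then $ww'\in E(G)$; (b) if $w\in A_i$ and $w'\in A_{i+1}$ then $ww'\in E(G)$; and (c) if $w\in A_i$, $w'\in A_j$ with $j\notin\{i-1,i,i+1\}$ then $ww'\notin E(G)$. Each is proved by exhibiting a forbidden induced subgraph ($bull$ or $chair$) among $w$, $w'$, and a few suitably chosen cycle vertices, using $p\geq 7$ to guarantee enough room on the cycle. For (c), if $ww'$ were an edge, then picking $v_{i-1}\in N_Q(w)\setminus N_Q(w')$ and $v_{j+1}\in N_Q(w')\setminus N_Q(w)$ together with $w,w'$ and one more cycle vertex on the arc between them should produce a $bull$ (with $ww'$ and the two cycle edges $wv_i$, $w'v_j$) or, if the intermediate vertex is non-adjacent to both, a $chair$; one checks the cases by how $i$ and $j$ are separated on $C_p$. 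For (a) and (b) the argument is shorter: if $w,w'\in A_i$ are non-adjacent, then $\{w,w',v_i,v_{i-1},v_{i+1}\}$ — with $v_i$ the common neighbor and $v_{i-1},v_{i+1}$ each adjacent to both — yields a forbidden configuration; similarly for $w\in A_i,w'\in A_{i+1}$ non-adjacent, the set $\{w,w',v_i,v_{i+1},v_{i+2}\}$ or $\{w,w',v_{i-1},v_i,v_{i+1}\}$ does the job.

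I expect the main obstacle to be the case analysis in part (c): one must be careful about the distance between the blocks $A_i$ and $A_j$ along the cycle (distance exactly $2$ versus larger), since the choice of the ``witness'' cycle vertex and whether the resulting subgraph is a $bull$ or a $chair$ depends on this. The hypothesis $p\geq 7$ is exactly what ensures that for any two non-consecutive blocks there is enough separation to carry out the argument without the witnesses colliding with the chosen non-neighbors. The remaining bookkeeping — that consecutive blocks really are completely joined and that this, combined with $Q$ being an induced cycle, gives precisely the clique-expansion adjacency — is routine once (a), (b), (c) are in hand.
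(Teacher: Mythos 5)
Your overall strategy is the same as the paper's: identify the blocks $A_i\cup\{v_i\}$, show each is a clique, that consecutive blocks are completely joined, and that non-consecutive blocks are anticomplete, in each case by exhibiting an induced $bull$ or $chair$. However, the concrete witness sets you propose for cases (a) and (b) do not work. For (a), if $w,w'\in A_i$ are non-adjacent, the set $\{w,w',v_{i-1},v_i,v_{i+1}\}$ induces a graph with $8$ edges (both $w$ and $w'$ are adjacent to \emph{all three} of $v_{i-1},v_i,v_{i+1}$, since $N_Q(w)=N_Q(w')=\{v_{i-1},v_i,v_{i+1}\}$), which is neither a $bull$ ($5$ edges) nor a $chair$ ($4$ edges), and is not a $C_5$ either; so no contradiction arises from this set. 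Similarly for (b), both of your candidate sets $\{w,w',v_i,v_{i+1},v_{i+2}\}$ and $\{w,w',v_{i-1},v_i,v_{i+1}\}$ induce $7$ edges and are not forbidden. The fix — and what the paper actually does — is to reach \emph{outside} the closed neighborhoods of $w$ and $w'$ on the cycle: for (a) take $\{v_{i-3},v_{i-2},v_{i-1},w,w'\}$, which is a $chair$ centered at $v_{i-1}$ with leaves $w,w'$ and the subdivided leg $v_{i-2},v_{i-3}$; for (b) take $\{v_{i-2},v_{i-1},v_i,w,w'\}$, which is a $bull$ on the triangle $v_{i-1}v_iw$ with pendants $v_{i-2}$ and $w'$. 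Note that this is also where the hypothesis $p\geq 7$ is genuinely needed for (a) and (b) — you attribute its role only to case (c), but it is what guarantees that $v_{i-3}$ and $v_{i-2}$ are distinct from $v_{i+1}$ and are non-neighbors of $w$ and $w'$.

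Your case (c) is described only in outline, but the intended argument is sound and matches the paper's: for $w\in A_i$, $w'\in A_j$ with the blocks non-consecutive and $ww'\in E(G)$, the set $\{v_{i-2},v_{i-1},v_i,w,w'\}$ (after orienting so that $w'$ has no neighbor among $v_{i-2},v_{i-1},v_i$) induces a $bull$. As you anticipate, the borderline separations ($j=i\pm 2$, $j=i\pm 3$) require choosing the arc on the correct side so that the three chosen cycle vertices avoid $N_Q(w')$; this bookkeeping is doable but must be done. So the approach is right, but as written the proof of the two ``completely joined'' cases rests on witness sets that do not induce forbidden subgraphs, which is a genuine gap.
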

\begin{proof}
    Let $w, w' \in A_i\setminus\{v_i\}$. Of course, if $ww' \notin E(G)$, then the set $\{v_{i-3}, v_{i-2}, v_{i-1}, w, w'\}$ induces a $chair$. 
    
    Let now $w' \in A_{i+1} \setminus \{v_{i+1}\}$ and let $ww' \notin E(G)$. Then the set $\{v_{i-2}, v_{i-1}, w, v_i, w'\}$ induces a $bull$. 

    Finally, let $w' \in A_j$, where $|j-i|\geq 2$, and let $ww' \in E(G)$. By symmetry we can assume $j \leq i-4$.   Then the set $\{v_{i-2}, v_{i-1}, v_i, w, w'\}$ induces a $bull$. 
\end{proof}

\begin{fact}
    $D$ separates $Q$ and $G\setminus(Q \cup A\cup D)$. Moreover, if $w \in D$ is adjacent to a connected component $C$ of  $G\setminus(Q \cup A\cup D)$, then $w$ dominates $C$.
\end{fact}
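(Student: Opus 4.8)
The plan is to prove both assertions by exhibiting the forbidden induced subgraphs ($bull$ or $chair$) that would arise if they failed. Throughout, fix the induced odd cycle $Q = v_1\ldots v_p$ with $p \geq 7$, recall from the previous Facts that every vertex of $N(Q)$ has neighborhood on $Q$ either a consecutive triple $\{v_{i-1}, v_i, v_{i+1}\}$ (the vertices of $A$) or all of $Q$ (the vertices of $D$), and that $G[Q \cup A]$ is a clique expansion of $Q$.

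\medskip
\noindent\textbf{$D$ separates $Q$ from $G \setminus (Q \cup A \cup D)$.} Suppose not: then there is a vertex $u \in G \setminus (Q \cup A \cup D)$ adjacent to some vertex $x \in Q \cup A$ with $u \notin N(Q) \cup Q$, so in particular $u$ is non-adjacent to every $v_j$. If $x = v_i \in Q$, then since $u \notin N(Q)$ it is non-adjacent to $v_{i-1}, v_{i+1}$, and using $p \geq 7$ the set $\{u, v_i, v_{i-1}, v_{i+1}, v_{i+2}\}$ (or with $v_{i-2}$) induces a $chair$ — formally I would check that $u$ has no consecutive neighbors among $v_{i-1}, v_i, v_{i+1}$ and mimic the $\ell = 1$ argument of the first Fact. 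If instead $x = w \in A_i \setminus \{v_i\}$, then $w$ is adjacent to $v_{i-1}, v_i, v_{i+1}$ but not $v_{i-2}, v_{i+2}$, while $u$ is adjacent to none of these; then $\{u, w, v_{i-1}, v_i, v_{i+1}\}$ together with $v_{i+2}$ (or $v_{i-2}$) yields a $chair$ or $bull$ exactly as in the proof that $G[Q\cup A]$ is a clique expansion. So $u$ must in fact lie in $N(Q) \cup Q$, a contradiction; hence every path from $Q \cup A$ to the outside must pass through $D$.

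\medskip
\noindent\textbf{Each $w \in D$ dominates every component of $G\setminus(Q\cup A\cup D)$ it meets.} Let $C$ be such a component and $w \in D$ adjacent to some $a \in C$. Suppose $w$ fails to dominate $C$; since $C$ is connected we may pick $a, b \in C$ with $ab \in E(G)$, $wa \in E(G)$, $wb \notin E(G)$. Now $b$ is non-adjacent to all of $Q$ (as $b \notin N(Q)$), and $w$ is adjacent to all of $Q$. Pick any three consecutive cycle vertices $v_{i-1}, v_i, v_{i+1}$ (here $p \geq 7$ guarantees there are enough): then $w$ is adjacent to all three, $b$ to none, $a b \in E(G)$, and $a$ is adjacent to at most a consecutive triple of $Q$, so in particular there are two consecutive cycle vertices, say $v_i, v_{i+1}$, both non-adjacent to $a$. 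The set $\{b, a, w, v_i, v_{i+1}\}$ then has edges $ba, aw, wv_i, wv_{i+1}, v_iv_{i+1}$ and no others — precisely a $bull$ (with $w$ playing the degree-$3$ center). That contradiction completes the proof.

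\medskip
\noindent The main obstacle, such as it is, is purely bookkeeping: one must be careful that in each case the chosen five vertices have \emph{exactly} the edge set of a $bull$ or $chair$ and no extra chords, which is where the hypothesis $p \geq 7$ (so that consecutive triples on $Q$ do not overlap or wrap around awkwardly) and the precise description of $N_Q(\cdot)$ for vertices of $A$ and $D$ from the earlier Facts are used. I would present the two bullet points above as the two halves of the proof, citing the first Fact of this subsection for the shape of neighborhoods on $Q$ and reusing verbatim the $chair$/$bull$ configurations already drawn in Figures~\ref{pic:thm_bullchair} and~\ref{pic:anti}.
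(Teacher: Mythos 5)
Your overall strategy is the same as the paper's (exhibit a forbidden induced subgraph whenever separation or domination fails), and the separation half is essentially right: the case $x=v_i\in Q$ is vacuous (a vertex of $G\setminus(Q\cup A\cup D)$ is by definition not in $N(Q)$), and for $x=w\in A_i$ the clean witness is the five-set $\{v_{i-2},v_{i-1},v_i,w,u\}$, which is the bull the paper uses; your set $\{u,w,v_{i-1},v_i,v_{i+1}\}$ is a diamond with a pendant, not a bull or chair, so the parenthetical "together with $v_{i+2}$ (or $v_{i-2}$)" is doing real work that you should make explicit.

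The domination half, however, contains a genuine error. The set $\{b,a,w,v_i,v_{i+1}\}$ with edge set $\{ba, aw, wv_i, wv_{i+1}, v_iv_{i+1}\}$ is \emph{not} a bull: its degree sequence is $(1,2,2,2,3)$, whereas the bull's is $(1,1,2,3,3)$. In a bull the two pendant edges hang off two \emph{different} vertices of the triangle, but in your configuration the path $b$--$a$ hangs off the single triangle vertex $w$; this graph (a triangle with a $P_3$ attached at one vertex) is neither a bull nor a chair, so no contradiction follows. The repair is to choose two \emph{non-consecutive} cycle vertices: since $w\in D$ is adjacent to all of $Q$ and $v_1v_3\notin E(G)$ (as $Q$ is an induced cycle of length at least $7$), the set $\{b,a,w,v_1,v_3\}$ has edges $wv_1, wv_3, wa, ab$ only, which is a chair centered at $w$ — exactly the paper's argument. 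Note also that your phrase "$a$ is adjacent to at most a consecutive triple of $Q$" is misleading: $a\in C\subseteq G\setminus(Q\cup A\cup D)$ is adjacent to \emph{no} vertex of $Q$, which is precisely what makes the chair induced.
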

\begin{proof}
  To prove the first part of the claim we need only to show that vertices from $A$ cannot be adjacent to the second neighborhood of $Q$. And this is obvious, since if a vertex $v \in A_i$ has a neighbor $u$ in the second neighborhood of $Q$, then $\{v_{i-2}, v_{i-1}, v_i, v, u\}$ induces a $bull$. 
  
  Suppose now $w\in D$ is adjacent to a connected component $C$ of $G\setminus (Q \cup A)$, but does not dominate $C$. Thus, there exist $u, u' \in C$ such that $wu, uu' \in E(G)$ but $wu' \notin E(G)$. But then the set $\{u', u, w, v_1, v_3\}$ induces a $chair$. 
\end{proof}

Note that we can assume, without loosing generality, that $D$ is an independent set (otherwise we have an exceptional subgraph $C_{2k+1} \oplus K_2$). Thus, the graph $G$ is 4-colorable if and only if 
\begin{enumerate}[label=(\roman*)]
    \item $G[Q \cup A]$ is 3-colorable and
    \item $G\setminus (Q \cup A \cup D)$ is 3-colorable. 
\end{enumerate}
By Theorem \ref{thm:bullchair} we know that $G\setminus D$ is 3-colorable if and only if it does not contain an odd wheel or a spindle graph $M_{3i+1}$. Every connected component $C$ of $G\setminus D$ is dominated by some vertex $w$ of $D$, so if $C$ does contain odd wheel, then $G$ contains $C_{2k+1}\oplus K_2$, and if $C$ contains $M_{3i+1}$, then $G$ contains $M_{3i+1}\oplus K_1$. 

This completes the proof of Theorem \ref{thm:bullchairC5}. 
\section{Proof of Theorem \ref{thm:bullclawC5}}

\medskip

The proof is again similar to the proof of Theorem \ref{thm:bullclaw}. Let us point out that for perfect graphs the theorem is trivially true. Therefore, we can restrict our attention to non-perfect graphs. A non-perfect $(bull, claw, C_5)$-free graph $G$ must contain an odd antihole or an induced cycle of length $p \geq 7.$ 





With Corollary \ref{lem:alpha2} this leaves us with only two possible cases.
\subsection{$G$ contains an induced odd cycle of length at least $7$ and $\alpha\geq 3$}

Let us recall that by Theorem \ref{thm:bullclaw2} our graph $G$ is a clique expansion of the cycle. Now by 
Theorem \ref{cliqueexpansion} $G$ is 5-colorable or 
$k_i+k_{i+1} \geq 6$ for some integer $i\in\{ 1, \ldots,  2n+1\}$ and $G$ contains $K_6$ or 
$k_1 + \ldots +k_{2n+1} - 5n > 0$. This is statement $(i)$ and $(vi)$.

\subsection{$G$ contains an odd antihole and $\alpha(G)=2$}

For a graph $G$ let $\beta_0(G)$ denote its matching number. The following fact is well known.

\begin{fact}\label{ChromCompl}
Let $G$ be a graph with $\alpha(G)=2.$ Then $\chi(G) = n - \beta_0(\overline{G}) \geq \frac{n}{2}.$ \qed
\end{fact}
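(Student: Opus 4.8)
The statement is that if $\alpha(G) = 2$ then $\chi(G) = n - \beta_0(\overline{G}) \geq n/2$, where $n = |V(G)|$. The plan is to translate the coloring problem into a matching problem in the complement graph. First I would observe that since $\alpha(G) = 2$, any independent set of $G$ has at most two vertices; equivalently, every color class in a proper coloring of $G$ has size $1$ or $2$. Dually, a set of two vertices is independent in $G$ precisely when those two vertices are adjacent in $\overline{G}$. Hence a proper coloring of $G$ in which exactly $t$ color classes have size $2$ (and the rest size $1$) corresponds exactly to a matching of size $t$ in $\overline{G}$: the $t$ pairs of like-colored vertices form $t$ pairwise-disjoint edges of $\overline{G}$.

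\textbf{Key steps.} The argument then runs as follows. Given any proper $k$-coloring of $G$, let $t$ be the number of color classes of size $2$; the remaining $k - t$ classes are singletons, so counting vertices gives $2t + (k - t) = n$, i.e. $k = n - t$. Since the $t$ doubleton classes yield a matching of size $t$ in $\overline{G}$, we get $t \leq \beta_0(\overline{G})$, hence $k = n - t \geq n - \beta_0(\overline{G})$; taking the minimum over all proper colorings gives $\chi(G) \geq n - \beta_0(\overline{G})$. Conversely, starting from a maximum matching $M$ of $\overline{G}$ of size $\beta_0(\overline{G})$, color the two endpoints of each edge of $M$ with a common color (a valid choice since matched vertices are non-adjacent in $G$), using a distinct color for each edge of $M$, and give every unmatched vertex its own fresh color; this is a proper coloring of $G$ using exactly $\beta_0(\overline{G}) + (n - 2\beta_0(\overline{G})) = n - \beta_0(\overline{G})$ colors, so $\chi(G) \leq n - \beta_0(\overline{G})$. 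Combining the two inequalities yields the equality $\chi(G) = n - \beta_0(\overline{G})$.

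\textbf{The final bound.} For the inequality $\chi(G) = n - \beta_0(\overline{G}) \geq n/2$, it suffices to note that a matching in any graph on $n$ vertices has at most $\lfloor n/2 \rfloor$ edges, so $\beta_0(\overline{G}) \leq n/2$ and therefore $n - \beta_0(\overline{G}) \geq n/2$. This step is entirely routine. There is no real obstacle here: the only point requiring a moment's care is the bijective correspondence between "color classes of size $2$" and "edges of a matching in $\overline{G}$", which hinges on the defining property $\alpha(G) = 2$ forcing all color classes to have size at most $2$ — this is precisely what makes the complementation trick exact rather than merely an inequality. Since the fact is flagged as "well known" in the paper, a concise two-line argument along these lines is all that is needed.
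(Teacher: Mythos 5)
Your proof is correct and is the standard argument for this well-known fact, which the paper states without proof: color classes under $\alpha(G)=2$ have size at most $2$, size-$2$ classes are exactly matching edges of $\overline{G}$, and $\beta_0(\overline{G})\le n/2$ gives the final bound. Nothing is missing.
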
 

Let $\overline{Q} = v_1 v_2 \ldots v_p v_1$ be an odd antihole. Note that if $p \geq 13$ then $\overline{Q}$ contains $K_6$ and if $p=11$, then the graph contains $C[1,4,1,4,1]$. So we may assume that $p \in \{7,9\}.$ If $|V(G)| \geq 11,$ then $\chi(G) \geq 6$ 
by Fact \ref{ChromCompl}. This is statement $(iv)$. So we may assume that $|V(G)| \leq 10.$

First we consider the case $p=9.$ If $G \cong \overline{C}_9 \oplus K_1,$ then $\chi(G)=6.$ This is statement $(iii)$. If $G \ncong \overline{C}_9 \oplus K_1,$ then $\chi(G)=5.$

Next we consider the case $p=7.$ If $7 \leq |V(G)| \leq 8,$ then $G \subset W_7$, hence $G$ is 5-colorable. Now consider 
$|V(G)|=9.$ If $G \cong \overline{C}_7 \oplus K_2,$ then $\chi(G)=6.$ This is statement $(ii)$. If $G \ncong \overline{C}_7 \oplus K_2,$ then $\chi(G)=5$ by Fact \ref{ChromCompl}. 

Finally, consider $|V(G)|=10.$ If $\Delta(G)=9,$ then $G$ is not 5-colorable. This is statement $(v)$. 
If $\beta_0(\overline{G})=5,$ then $G$ is 5-colorable by Fact \ref{ChromCompl}.
Hence we may assume that $\Delta(G) \leq 8$ and $\beta_0(\overline{G}) \leq 4.$ Let $H:= G - \overline{Q}.$ We first show the following fact.

\begin{fact}\label{Hcomplete}
   The graph $H$ is complete.
\end{fact}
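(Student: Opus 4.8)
We work in the setting $p=7$, $|V(G)|=10$, $\alpha(G)=2$, $\Delta(G)\le 8$ and $\beta_0(\overline G)\le 4$, and we must show $H:=G-\overline Q$ — a graph on exactly three vertices — is a triangle. First I would use Fact~\ref{Qdom}: since $\alpha(G)=2$, $\overline Q$ is dominating, so every vertex of $H$ lies in $N(\overline Q)$, and more importantly $N_2(\overline Q)=\emptyset$. The real content is to rule out a non-edge inside $H$. So suppose $x,y\in V(H)$ with $xy\notin E(G)$; then $\{x,y\}$ is independent, and since $\alpha(G)=2$ every vertex of $\overline Q$ must be adjacent to $x$ or to $y$, i.e.\ $N_{\overline Q}(x)\cup N_{\overline Q}(y)=\overline Q$.

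\textbf{Key steps.} By Lemma~\ref{notwocons} every vertex $w\in N(\overline Q)$ has $d_{\overline Q}(w)\in\{4,5,6,7\}$ (no two consecutive non-neighbours among the $7$ vertices), and by Lemma~\ref{d4} the value $4$ would produce an induced $C_5$, which is forbidden; hence $d_{\overline Q}(w)\in\{5,6,7\}$ for each of $x,y,z$. If some vertex $w$ of $H$ has $d_{\overline Q}(w)=7$, then $\{w\}\cup\overline Q$ is $\overline C_7\oplus K_1$; together with the remaining two vertices of $H$ this is a subgraph on $9$ vertices — I would argue that since those two vertices are dominated and $\alpha=2$ they are joined to $w$ (else an independent triple with two vertices of $\overline Q$), giving $\overline C_7\oplus K_2$ or forcing enough edges that $\chi\ge 6$; in any case this is covered by an earlier branch, so we may assume $d_{\overline Q}(w)\in\{5,6\}$ for all $w\in V(H)$. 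Now for the non-edge $xy$: $d_{\overline Q}(x)+d_{\overline Q}(y)\ge 7$ forces $\{5,6\}$ or $\{6,6\}$ or $\{6,5\}$, and using the cyclic structure of $\overline Q$ (the non-neighbours of a degree-$5$ vertex are two non-consecutive vertices, of a degree-$6$ vertex a single vertex) one checks that $N_{\overline Q}(x)\cup N_{\overline Q}(y)=\overline Q$ can only happen in a few configurations; in each I would exhibit an induced $claw$ or $bull$ centred appropriately among $x,y$ and three vertices of $\overline Q$, or else deduce $xy\in E(G)$ — exactly the "Fact~\ref{konkr}" type argument already used in the $p=5,7$ antihole analysis of Theorem~\ref{thm:bullclaw}. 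This contradiction shows no non-edge exists in $H$, so $H\cong K_3$.

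\textbf{Main obstacle.} The delicate part is the case analysis showing that $N_{\overline Q}(x)\cup N_{\overline Q}(y)=\overline Q$ with $xy\notin E(G)$ and both degrees in $\{5,6\}$ is impossible in a $(bull,claw,C_5)$-free graph; one must carefully track which pairs/triples of vertices of $\overline Q$ are missed and find, in each rotationally distinct case, the forbidden induced subgraph on $\{x,y\}$ plus a suitable triple of $v_i$'s. I expect this to reduce, after using $\overline C_7$'s adjacency pattern ($v_i\sim v_j$ iff $i-j\not\equiv\pm1\pmod 7$), to only two or three essentially different configurations, each killed by a $claw$ at $x$ or a $bull$ through $x$ and $y$; the third vertex $z$ of $H$ plays no role here and is handled separately by the same degree restrictions. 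Once $H\cong K_3$ is established the subsequent argument (bounding $\chi(G)$ via Fact~\ref{ChromCompl} and the structure of how $H$ attaches to $\overline Q$) can proceed.
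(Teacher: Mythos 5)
There is a genuine gap, and it is structural rather than a matter of missing details. Your plan is to derive an outright contradiction from a non-edge $xy$ in $H$ by exhibiting an induced $bull$, $claw$ or $C_5$ among $x,y$ and vertices of $\overline{Q}$. This cannot work. First, in the regime under consideration we have $\alpha(G)=2$, and both the $bull$ and the $claw$ contain an independent set of size $3$; hence $G$ contains \emph{no} induced bull or claw anywhere, and the only forbidden subgraph with any force here is $C_5$. Second, the configuration you would have to kill is realizable: take $\overline{C}_7$ together with two non-adjacent vertices $x,y$, where $x$ is adjacent to all of $\overline{Q}$ except $v_1$ and $y$ to all except $v_2$. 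One checks (each of $x,y$ has only one non-neighbour on $\overline{Q}$, so neither can lie on an induced $C_5$ inside $\{x,y\}\cup \overline{Q}$, and $\overline{C}_7$ itself is $C_5$-free) that this graph is $(bull,claw,C_5)$-free with $\alpha=2$ and $N_{\overline{Q}}(x)\cup N_{\overline{Q}}(y)=\overline{Q}$. So no local forbidden-subgraph argument can rule out a non-edge in $H$; indeed the statement is only true \emph{modulo landing in case $(ii)$ of Theorem~\ref{thm:bullclawC5}}, i.e., the correct conclusion from a non-edge is ``$G$ contains $\overline{C}_7\oplus K_2$'', not ``contradiction''.

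What your proposal is missing are precisely the two standing hypotheses that the argument must exploit: $\beta_0(\overline{G})\le 4$ and $\Delta(G)\le 8$. From $w_1w_2\notin E(G)$ one gets an edge of $\overline{G}$ disjoint from $\overline{Q}$; if $w_3$ also had a non-neighbour $v_i$ on $\overline{Q}$, then $w_3v_i$ together with a perfect matching of the path $C_7-v_i$ (recall $\overline{G}$ restricted to $\overline{Q}$ is a $C_7$) would give $\beta_0(\overline{G})=5$, contradicting the assumption. Hence $w_3$ is complete to $\overline{Q}$; then $\Delta(G)\le 8$ forces $w_3$ to miss one of $w_1,w_2$, the same matching count forces that remaining vertex to be complete to $\overline{Q}$ as well, and $\alpha(G)=2$ forces the edge between these two universal vertices, yielding $\overline{C}_7\oplus K_2$. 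Your handling of the $d_{\overline{Q}}(w)=7$ case is also off: a vertex complete to $\overline{Q}$ admits no ``independent triple with two vertices of $\overline{Q}$'', and $\overline{C}_7\oplus K_1$ is not one of the exceptional outcomes of Theorem~\ref{thm:bullclawC5} (it is $5$-colorable), so it is not ``covered by an earlier branch''. The degree bounds $d_{\overline{Q}}(w)\in\{5,6\}$ via Lemmas~\ref{notwocons} and~\ref{d4} are correct but are used by the paper only \emph{after} this fact, not to prove it.
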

\begin{proof}
Let $C= \{ v \in N(\overline{Q}) : N_{\overline{Q}}(v)=\overline{Q} \}$. Let $H = \{w_1,w_2,w_3\}.$ Suppose $H$ is not complete 
and let $w_1w_2 \notin E(G).$ Then $w_3 \in C.$ Since $\Delta(G) \leq 8,$ we may assume that $w_1w_3 \notin E(G).$ Hence $w_2 \in C$ as well. Now $w_2w_3 \in E(G)$ and so $G$ contains $\overline{C}_7 \oplus K_2,$ This is statement $(ii)$.
\end{proof}

Since $\Delta(G) \leq 8$ we obtain $d_{\overline{Q}}(w) \leq 6$ for any $w \in V(H).$ 
By Lemma \ref{notwocons} we know that for any $w \in V(H)$ we have $d_{\overline{Q}}(w) \geq 4$. Moreover, if 
$d_{\overline{Q}}(w) = 4$, then by Lemma \ref{d4} we have induced $C_5$, a contradiction. Hence, we may assume that 
$5 \leq d_{\overline{Q}}(w) \leq 6$ for any $w \in V(H).$ Moreover, we obtain the following fact. It can be proven in exactly the same way as Lemma \ref{d4}.

\begin{fact}\label{antiholeC5}
If $d_{\overline{Q}}(w) = 5$ for a vertex $w \in V(H),$ then there exists some integer $i$ such that $wv_i, wv_{i+2} \notin E(G).$     
\end{fact}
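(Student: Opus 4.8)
\textbf{Proof proposal for Fact~\ref{antiholeC5}.}

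The plan is to mimic the argument of Lemma~\ref{d4} almost verbatim. Let $w \in V(H)$ with $d_{\overline{Q}}(w)=5$, so $w$ misses exactly two vertices of the $7$-antihole $\overline{Q}=v_1\ldots v_7$. By Lemma~\ref{notwocons}, $w$ has no two consecutive non-neighbors on $\overline{Q}$; hence the two non-neighbors of $w$ are at distance exactly $2$ along the cycle $v_1 v_2 \ldots v_7 v_1$ that underlies $\overline{Q}$ — that is, they are of the form $v_j$ and $v_{j+2}$ for some $j$. It remains only to rule out the degenerate possibility that this conclusion is vacuous, i.e.\ that the two non-neighbors could be at cyclic distance $3$; but on a $7$-cycle two vertices that are not consecutive are either at distance $2$ or at distance $3$, and distance $3$ means distance $4$ from the other side, so two non-neighbors that are both non-consecutive and such that there are only two of them must in fact be at distance $2$ (on $C_7$, picking two vertices with no two consecutive forces them to be $v_j, v_{j+2}$ up to reversal). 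Rename indices so this is $v_i, v_{i+2}$, and we are done.

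Alternatively, and perhaps more in the spirit of ``the same way as Lemma~\ref{d4}'', I would phrase it as: if the two non-neighbors of $w$ were at distance $3$ on the underlying $7$-cycle, say $v_i$ and $v_{i+3}$, then $w$ is adjacent to $v_{i+1}$ and $v_{i+2}$, which are consecutive non-neighbors of $v_i$ on $\overline{Q}$ — no wait, one must be careful here about the direction of the adjacency relation, since on the \emph{antihole} $\overline{Q}$ the vertex $v_i$ is adjacent to all $v_j$ with $j \notin \{i-1,i,i+1\}$ (indices mod $7$). So in the antihole, $v_i$ is adjacent to $v_{i+2}, v_{i+3}, v_{i-2}, v_{i-3}$ and non-adjacent to $v_{i-1}, v_{i+1}$. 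With $w$ having exactly the non-neighbors $v_i, v_{i+3}$ on $\overline{Q}$, consider the five vertices $\{w, v_i, v_{i+2}, v_{i-1}, v_{i+1}\}$ as in Lemma~\ref{d4}: $w$ is adjacent to $v_{i+2}, v_{i-1}, v_{i+1}$ and not to $v_i$; within $\overline{Q}$, $v_i\sim v_{i+2}$, $v_i\sim v_{i-2}$, and one checks the remaining adjacencies to recover an induced $C_5$, contradicting that $G$ is $C_5$-free. Hence the non-neighbors cannot be at distance $3$, so they are at distance $2$, giving the claim. Either route closes the fact.

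The only genuine obstacle is bookkeeping: one must keep straight that $\overline{Q}$ is the \emph{antihole}, so ``consecutive on $\overline{Q}$'' in Lemma~\ref{notwocons} refers to the cyclic order $v_1,\ldots,v_7$, not to adjacency in $G$, and that $d_{\overline{Q}}(w)=5$ on a $7$-vertex set leaves precisely two missed vertices whose only allowed cyclic configuration (by Lemma~\ref{notwocons}) is ``distance $2$''. Given that, no computation beyond the $C_5$-check already performed in Lemma~\ref{d4} is needed, which is exactly why the paper can assert the proof is ``the same way as Lemma~\ref{d4}''. I would therefore present the short version: invoke Lemma~\ref{notwocons} to forbid two consecutive missed vertices, observe that on a $7$-cycle two non-consecutive vertices chosen as the unique pair of misses must be at cyclic distance $2$, and rename to $v_i, v_{i+2}$.
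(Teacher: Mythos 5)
The core of this fact is not combinatorial bookkeeping but the $C_5$-freeness of $G$, and both of your routes go wrong at exactly that point. With $d_{\overline{Q}}(w)=5$ the two non-neighbors of $w$ are non-consecutive by Lemma~\ref{notwocons}, so they sit at cyclic distance $2$ or $3$ on the underlying $7$-cycle; your preferred ``short version'' asserts that distance $3$ is impossible for purely combinatorial reasons (``picking two vertices with no two consecutive forces them to be $v_j, v_{j+2}$''), which is false --- the pair $\{v_1, v_4\}$ on $C_7$ is non-consecutive and at distance $3$, and nothing in Lemma~\ref{notwocons} rules it out. Distance $3$ can only be excluded by producing an induced $C_5$ and invoking the hypothesis of Theorem~\ref{thm:bullclawC5} that $G$ is $C_5$-free; that is precisely why the paper says the fact is proved ``in exactly the same way as Lemma~\ref{d4}'' and why it is named as it is.

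Your second route has the right plan but the wrong vertex set. If the two non-neighbors are $v_i$ and $v_{i+3}$, then $w$ has two consecutive neighbors $v_{i+1}, v_{i+2}$ flanked by non-neighbors $v_i, v_{i+3}$, and the construction of Lemma~\ref{d4} (applied with its index equal to $i+1$) yields the induced cycle $w\, v_{i+1}\, v_{i+3}\, v_i\, v_{i+2}\, w$ on the set $\{w, v_{i+1}, v_{i+3}, v_i, v_{i+2}\}$, contradicting $C_5$-freeness. The set you actually wrote down, $\{w, v_i, v_{i+2}, v_{i-1}, v_{i+1}\}$, does not induce a $C_5$: there $v_i$ is adjacent only to $v_{i+2}$ (it is non-adjacent to $v_{i\pm 1}$ in the antihole and to $w$ by assumption), so it has degree $1$, while $w$ has degree $3$. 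Since you defer this verification (``one checks the remaining adjacencies'') and then recommend presenting the invalid combinatorial version, the proposal as written does not establish the fact; replacing the $5$-set by the one above and concluding via $C_5$-freeness repairs it and recovers the paper's intended argument.
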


\begin{fact}\label{antiholematching}
If $w_1v_i, w_2v_{i+1}, w_3v_{i+2} \notin E(G)$ or $w_1v_i, w_2v_{i+1}, w_3v_{i+4} \notin E(G),$ then $G$ is 5-colorable.   
\end{fact}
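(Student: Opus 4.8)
The plan is to recast $5$-colorability as a matching problem in the complement and then to write down an explicit perfect matching in each of the two cases. Since $\alpha(G)=2$ and $|V(G)|=10$, Fact~\ref{ChromCompl} gives $\chi(G)=10-\beta_0(\overline{G})$, so $G$ is $5$-colorable precisely when $\beta_0(\overline{G})\geq 5$, that is, when $\overline{G}$ admits a perfect matching on its ten vertices. This reduces the statement to producing such a matching under either hypothesis.

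First I would record the relevant structure of $\overline{G}$. As $\overline{Q}=v_1\ldots v_7$ is an antihole, $G[\overline{Q}]$ is the complement of a $7$-cycle; in the labeling used here one has $v_iv_{i+1}\notin E(G)$ and $v_iv_{i+2},v_iv_{i+3}\in E(G)$. Hence in $\overline{G}$ the vertices $v_1,\ldots,v_7$ induce exactly the cycle $v_1v_2\cdots v_7v_1$. By Fact~\ref{Hcomplete} the triple $\{w_1,w_2,w_3\}$ is a clique in $G$, so it is an independent set in $\overline{G}$; thus in any matching of $\overline{G}$ each $w_j$ must be paired with a $v$-vertex, and the only admissible partners are the non-neighbors of $w_j$ on $\overline{Q}$, which are exactly the $\overline{G}$-edges incident to $w_j$. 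The three hypothesized non-edges are precisely three such edges, joining $w_1,w_2,w_3$ to three distinct cycle vertices.

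The heart of the argument is then to use the given non-edges to match $w_1,w_2,w_3$ and to check that the four remaining cycle vertices decompose into pieces of even order, so that two cycle edges finish the matching. In the first case, deleting the consecutive vertices $v_i,v_{i+1},v_{i+2}$ from the cycle leaves the path $v_{i+3}v_{i+4}v_{i+5}v_{i+6}$, matched by $v_{i+3}v_{i+4}$ and $v_{i+5}v_{i+6}$; together with $w_1v_i,w_2v_{i+1},w_3v_{i+2}$ this is a perfect matching of $\overline{G}$. In the second case, deleting $v_i,v_{i+1},v_{i+4}$ leaves the two cycle edges $v_{i+2}v_{i+3}$ and $v_{i+5}v_{i+6}$, which together with $w_1v_i,w_2v_{i+1},w_3v_{i+4}$ again give a perfect matching. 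Either way $\beta_0(\overline{G})\geq 5$, hence $\chi(G)\leq 5$.

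I do not anticipate a real obstacle: after the reduction through Fact~\ref{ChromCompl}, everything rests on the elementary observation that removing three vertices from $C_7$ in the two prescribed patterns always leaves a set that the remaining cycle edges can cover. The only point needing care is the index arithmetic modulo $7$, namely confirming that each pair $v_av_{a+1}$ used above is genuinely consecutive on the cycle (and so an edge of $\overline{G}$); this is why I would spell out the residual path/edge decomposition explicitly rather than leaving it to the reader.
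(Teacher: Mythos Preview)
Your proposal is correct and follows exactly the paper's approach: reduce via Fact~\ref{ChromCompl} to exhibiting a perfect matching in $\overline{G}$. The paper's proof is a one-line assertion that $\beta_0(\overline{G})=5$ in both cases, whereas you spell out the explicit matchings $\{w_1v_i,w_2v_{i+1},w_3v_{i+2},v_{i+3}v_{i+4},v_{i+5}v_{i+6}\}$ and $\{w_1v_i,w_2v_{i+1},w_3v_{i+4},v_{i+2}v_{i+3},v_{i+5}v_{i+6}\}$; your version is simply a fleshed-out form of the same argument.
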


\begin{proof}
Observe that in these two cases we have $\beta_0(\overline{G})=5$, and $G$ is 5-colorable by Fact \ref{ChromCompl}.
\end{proof}

Suppose first that $d_{\overline{Q}}(w_i) = 6$ for $i=1,2,3.$ Then $G$ contains  $K_6$ or $G$ is 5-colorable by 
Fact \ref{antiholematching}. Suppose next that $d_{\overline{Q}}(w_1) = 5.$ We may assume that $w_1v_i, w_1v_{i+2} \notin E(G).$ 
If $w_2v_{i+1} \notin E(G)$ or $w_3v_{i+1} \notin E(G),$ then $\overline{Q} - v_{i+1} + w_1$ is a $\overline{C_7}$ such that the corresponding 
subgraph $H$ is not complete, contradicting Fact \ref{Hcomplete}. So we may assume that $w_2v_{i+1}, w_3v_{i+1} \in E(G).$ Now if 
$v_{i+3}, v_{i+6} \in N(w_2) \cap N(w_3),$ then there is  $K_6.$ Hence we may assume that $w_2v_{i+3} \notin E(G).$ Then by Lemma \ref{notwocons} and Fact \ref{antiholeC5} we obtain $v_{i+4}, v_{i+6} \in N(w_2).$ Now by Fact \ref{antiholematching} we conclude that 
$v_{i+4}, v_{i+6} \in N(w_3)$ and find  $K_6.$ 

This completes the proof of Theorem \ref{thm:bullclawC5}.

\medskip


\begin{thebibliography}{99}

\bibitem{benrebea} A. Ben Rebea: Étude des stables dans les graphes quasi-adjoints (Thèse) Université de Grenoble, France (1981). 

\bibitem{BM08} J. A. Bondy, U.\,S.\,R. Murty: Graph Theory.
Springer, 2008.

\bibitem{BDHKRSV22} C. Brause, T. Doan, P. Holub, A. Kabela, Z. Ryj\'a\v{c}ek, I. Schiermeyer, P. Vr\'ana:
Forbidden induced subgraphs and perfectness for $K_{1,3}$-free perfect graphs of independence number at least 4,
Discrete Math. 345/6, (2022), 112837. 

\bibitem{alpha3}
C. Brause, P. Holub, A. Kabela, Z. Ryj{\'a}\v{c}ek, I. Schiermeyer, P.~Vr{\'a}na:
On forbidden induced subgraphs for claw-free perfect graphs,
Discrete Math. 342 (2019), 1602--1608.

\bibitem{clawdiamondnet}
C. Brause, P. Holub, I. Schiermeyer:
On 3-colourability, 4-criticality and induced prisms in claw-free graphs, manuscript 2025.

\bibitem{BHS} D. Bruce, C.T. Hoàng, J. Sawada: A Certifying Algorithm for $3$-Colorability of $P_5$-free Graphs, Algorithms and Computation. ISAAC 2009. Lecture Notes in Computer Science, 5878 (2009) 594-604.

\bibitem{CGSZ}
M.\,Chudnovsky, J.\,Goedgebeur, O.\,Schaudt, M.\,Zhong: Obstructions for three-coloring graphs without induced paths on six vertices. J. Combin. Theory Ser. B 140 (2020), 45--83.

\bibitem{ChSSS20}
M.\,Chudnovsky, A.\,Scott, P.\,Seymour, S.\,Spirkl: Detecting an Odd Hole. J. of the ACM 67/1 (2020), A5.

\bibitem{ChRST06}
M. Chudnovsky, N. Robertson, P. Seymour, R. Thomas:
The strong perfect graph theorem,
Ann. Math. 164 (2006), 51--229.




\bibitem{GJPS17}
P. A. Golovach, M. Johnson, D. Paulusma, J. Song:
A survey on the computational complexity of coloring graphs with forbidden subgraphs,
J. Graph Theory 84 (2017), 331--363.


\bibitem{kkol} N. Hodur, M. Pil\'sniak, M. Prorok, P. Rz\k{a}\.zewski, Finding large $k$-colorable induced subgraphs in (bull, chair)-free and (bull,E)-free graphs, manuscript arXiv:2504.04984, 2025.

\bibitem{stara} N. Hodur, M. Pil\'sniak, M. Prorok, I. Schiermeyer, On 3-colorability of $(bull, H)$-free graphs, manuscript arXiv:2505.17193, 2025.

\bibitem{Ran}
B. Randerath, The Vizing bound for the chromatic number based on forbidden pairs, Aachen, Techn. Hochsch., PhD Thesis, 1998.

\bibitem{RS04}
B. Randerath, I. Schiermeyer:
Vertex coloring and forbidden subgraphs - a survey,
Graphs Combin. 20 (2004), 1--40.

\bibitem{RS04-1} B. Randerath, I. Schiermeyer:
3-Colorability $\in \cal{P}$ for $P_6$-free Graphs, Discrete Appl. Math. 136 (2004), 299--313.

\bibitem{RS19}
B. Randerath, I. Schiermeyer:
Polynomial $\chi$-binding functions and forbidden induced subgraphs: a survey,
Graphs Combin. 35 (2019), 1--31.

\bibitem{RST02} B. Randerath, I. Schiermeyer, M. Tewes: $3$-colorability and Forbidden Subgraphs II:
Polynomial Algorithms, Discrete Math. 251 (2002), 137--153.

\bibitem{Sum} D. P. Sumner, Subtrees of a graph and chromatic number, in: The Theory and Applications of Graphs, (G. Chartrand, ed.), John Wiley \& Sons, New York (1981), 557--576.
.

\end{thebibliography}
\end{document}